\definecolor{green}{rgb}{0,0.6,0.0} 
\global\long\def\rn{\Re^{n}}%
\global\long\def\r{\Re}%
\global\long\def\argmin{\operatorname*{argmin}}%
\newcommand{\R}{\mathbb{R}}
\newcommand{\inner}[2]{\langle{#1},{#2}\rangle}
\newcommand{\cl}{\mathrm{cl}\,}
\newcommand{\Inner}[2]{\left\langle{#1},{#2}\right\rangle}
\newcommand{\norm}[1]{\|#1\|}
\newcommand{\ri}{{\mbox{\rm ri\,}}}
\newcommand{\inte}{\mathrm{int}}
\newcommand{\dom}{{\mbox{\rm dom\,}}}
\newcommand{\Gr}{\mathrm{Gr}\,}
\newcommand{\rbd}{\mathrm{rbd}\,}
\newcommand{\vgap}{\vspace{.1in}}
\newcommand{\lam}{\lambda}
\newcommand{\bConv}[1]{\overline{\mbox{\rm Conv}}\,(\Re^{#1})}
\newcommand{\bi}{\begin{itemize}}
\newcommand{\ei}{\end{itemize}}
\newcommand{\ba}{\begin{array}}
\newcommand{\ea}{\end{array}}
\newcommand{\lm}{p}
\newcommand{\pen}{c}
\newcommand{\z}{z}
\date{January ??, 2022}
\title{Iteration-complexity  of an inner  accelerated  inexact proximal augmented Lagrangian method based on the classical Lagrangian function}
\begin{document}
	
	\makeatletter
	\maketitle
	
	\begin{center}
			\textsc{Weiwei Kong}\footnote{Computer Science and Mathematics Division, Oak Ridge National Laboratory,
            Oak Ridge, TN, 37830. (email: {\tt wwkong92@gmail.com}). This author has been supported by (i) the US Department of Energy
            (DOE) and UT-Battelle, LLC, under contract DE-AC05-00OR22725 and (ii) the Exascale Computing Project (17-SC-20-SC), a collaborative
            effort of the U.S. Department of Energy Office of Science and the
            National Nuclear Security Administration.},
			\textsc{Jefferson G. Melo\footnote{Instituto de Matem\'atica e Estat\'istica, Universidade Federal de Goi\'as, Campus II- Caixa
				Postal 131, CEP 74001-970, Goi\^ania-GO, Brazil. (email: {\tt jefferson@ufg.br}). This author was partially supported by CNPq grant 312559/2019-4 and FAPEG/GO. }, 
			\textsc{And} 
			Renato D.C. Monteiro}\footnote{School of Industrial and Systems
			Engineering, Georgia Institute of
			Technology, Atlanta, GA, 30332-0205.
			(email: {\tt monteiro@isye.gatech.edu}). This author
			was partially supported by ONR Grant
			N00014-18-1-2077 and AFORS Grant FA9550-22-1-0088.}
		\end{center}
	
\vspace*{1em}	
	
	\begin{abstract}
This paper establishes the iteration-complexity
of an inner accelerated inexact proximal augmented Lagrangian
(IAIPAL) method for solving linearly-constrained smooth nonconvex composite optimization problems that is
based on the classical augmented Lagrangian (AL) function. 
			More specifically,
			each IAIPAL iteration consists of inexactly solving a proximal AL subproblem by an accelerated composite gradient (ACG) method 
			followed by a classical Lagrange multiplier update.
			Under the assumption that the domain of the composite function is bounded and the problem has a Slater point,
			it is shown that
			IAIPAL generates an approximate  stationary solution 
			in 
			${\cal O}(\varepsilon^{-5/2}\log^2 \varepsilon^{-1})$ ACG iterations where $\varepsilon>0$ is a
			tolerance for both stationarity and feasibility. 
    	Moreover, the above bound is derived without assuming that
    	the initial point is feasible. Finally, numerical results are  presented to demonstrate the strong practical performance of IAIPAL.
    	

\vspace*{1em}

{\bf Key words.} inexact proximal augmented Lagrangian method,
linearly constrained smooth nonconvex composite programs,
inner accelerated first-order methods, iteration complexity.

\vspace*{1em}

{\bf AMS subject classifications.}  47J22, 49M27, 90C25, 90C26, 90C30, 90C60, 65K10.
\end{abstract}

	\section{Introduction} \label{sec:int}

	This paper presents an inner accelerated inexact proximal   augmented Lagrangian
	(IAIPAL) method for solving
	the linearly-constrained smooth nonconvex composite optimization (NCO) problem
	\begin{equation} \label{optl0}
	\phi^*:=\min \{ \phi(z) :=  f(z) + h(z) : A z  =b\},
	\end{equation}
	where  $A:\Re^n \to \Re^l$ is a linear operator, $b\in\Re^l$, $h:\Re^n \to (-\infty,\infty]$ is a closed proper convex function which is $M_h$-Lipschitz continuous on its domain, and $f$ is a real-valued differentiable
	nonconvex function such that, for some scalars $L_f\geq m_f>0$, $f$ is
	$m_f$-weakly convex on the domain,
	$\dom h$, of $h$
	(i.e., satisfies \eqref{lowerCurvature-m} below) and its gradient is $L_f$--Lipschitz.
For a given tolerance pair $(\hat \rho,\hat \eta) \in \Re^2_{++}$,
its goal is to find a triple $(\hat z,\hat p,\hat w)$
 satisfying
\begin{align}\label{eq:approx_stationary1'}
		\hat  w\in\nabla f(\hat z)+\partial h(\hat  z)+A^*\hat  \lm,\quad
	\|\hat w\|\leq \hat \rho,\quad  \|A\hat  z-b\|\leq \hat \eta.
		\end{align}
 More specifically, IAIPAL 
is based on the
	augmented Lagrangian (AL) function ${\cal L}_c(z;p)$ defined as
	\begin{equation}\label{lagrangian2}
	{\cal L}_c(z;\lm):=f(z)+h(z)+\left\langle\lm,Az-b\right\rangle+\frac{\pen}{2}\|Az-b\|^2,
	\end{equation}
	 which has been thoroughly studied
	in the literature (see for example \cite{AybatAugLag,Ber1,LanMonteiroAugLag,ShiqiaMaAugLag16,MR0418919}).
	Roughly speaking, for a fixed
	stepsize $\lam>0$, a scalar $\alpha > 0$, and
	initial points $z_0 \in \dom h$ and
	$p_0=0$,
	IAIPAL repeatedly
	performs the following iteration:
	given $(z_{k-1},\lm_{k-1})  \in \dom h \times \Re^l$, it computes $(z_k,\lm_{k})$ as
	\begin{align}
	z_k &\approx \argmin_z \left \{ \lam {\cal L}_\pen(z,\lm_{k-1}) + \frac12 \| z-z_{k-1}\|^2 \right\} \quad \label{exactzk0}, \\
	\lm_k &
	=  p_{k-1} + 
	\begin{cases}
    c(Az_{k}-b), & k\equiv1\bmod \lceil\alpha c\rceil,\\
    0, & \text{otherwise},
    \end{cases}\label{def:pk0}
	\end{align}
	where $z_k$ in \eqref{exactzk0} is
	a suitable approximate solution of the underlying
	prox-AL subproblem \eqref{exactzk0}. 
IAIPAL sets $\lam=1/(2m_f)$ which,
due to the fact that $f$ is $m_f$-weakly convex, guarantees that the objective 	function of  \eqref{exactzk0} is
	strongly convex. 
	Moreover, it
	computes $z_k$ by approximately solving
	subproblem \eqref{exactzk0} by a
	strongly convex version of FISTA, which is
	a well-known accelerated composite gradient (ACG) variant for solving convex composite
	 optimization problems
	(see for example
	\cite{ beck2009fast,MontSvaiter_fista,nesterov1983method}). The latter point
	is then used to construct a
	triple $(\hat z_k,\hat p_k,\hat w_k)$ and IAIPAL
	stops if it  satisfies \eqref{eq:approx_stationary1'}.
	Otherwise, an auxiliary novel
	test is performed to decide
	whether: i) $c$ should be left unchanged, or; ii)  $c$ is
	updated
	as $c\leftarrow \tau c$ with $\tau>1$ and
	$(z_k,p_k)$ either changed to
	$(z_0,p_0)$ (cold restart)
	or to $(z_k,p_0)$ (warm restart).
	Finally, $k$ is updated to $k+1$ and
	the iteration described above
	is repeated.\\

\noindent\textit{Related works}. 
We mainly focus our attention on works dealing with iteration
complexities of penalty-based and/or AL-based methods. Furthermore, all of the AL methods below use the multiplier update
\begin{gather}
p_{k}= (1-\theta)(p_{k-1}+\chi_{k}c_{k}(Az_k - b)) \label{eq:gen_dual_update2}
\end{gather}
for $\theta\in[0,1)$ and $\chi_k \in [0,1]$ at every $k\geq 1$.
For consistency, the complexities in this review refer to the effort
of obtaining 
an approximate stationary point as in \eqref{eq:approx_stationary1'}.
Note that even though these complexities
are described as bounds on the number of
(possibly ACG) iterations,
they are also 
bounds on the total 
number of $h$-resolvent computations
and/or gradient evaluations of $f$. 

Iteration complexities of quadratic penalty methods for solving  \eqref{optl0} under the assumption that
$f$ is convex and  $h$  is an indicator function of a convex set were first analyzed in \cite{LanRen2013PenMet} and further studied  in
\cite{Aybatpenalty,IterComplConicprog}. Iteration complexities of first-order augmented Lagrangian (AL) methods for solving
the aforementioned class of convex problem have been studied in  \cite{AybatAugLag,LanMonteiroAugLag,Li-Qu19,ShiqiaMaAugLag16,zhaosongAugLag18,Patrascu2017,YangyangAugLag17}.
Proximal quadratic penalty (PQP)
 methods were first studied in \cite{WJRproxmet1} and further developed in \cite{WJRComputQPAIPP, MinMax-RenWilliam,PPmetNonconvex2019}.  

Classical proximal AL (PAL) methods for solving \eqref{optl0} under the assumption that $f$ is convex, $\theta=0$, and $\chi_k=1$ for every $k$ have first been studied in \cite{rockafellar1976augmented}. Recently, papers \cite{HongPertAugLag,RJWIPAAL2020} studied PAL methods under the assumption that $f$ is (possibly) nonconvex, $\theta \in (0,1]$, and
$\chi_k=1$ for every $k$. However, as $\theta$ approaches zero, the
prox stepsize $\lam$ of these methods converge to zero
which causes the following issues:
(i) their derived complexity bounds 
diverge to infinity (see
the second column in Table~\ref{tab:summary_tbl} below), which invalidates their analyses for
the case of $\theta=0$; and (ii) deteriorating computational performance. 

Papers \cite{ImprovedShrinkingALM20, inexactAugLag19} 
study non-proximal AL methods under the assumption that $f$ is nonconvex, $\theta=0$, and $\chi_k=O(c_k^{-1})$ for every $k$. It is worth mentioning that both \cite{ImprovedShrinkingALM20, inexactAugLag19} make a strong assumption about the generated iterates (see condition ${\cal F}$ in Table~\ref{tab:conditions}), which have only been shown to hold when $h$ is the indicator of a polyhedron or a ball. Moreover, \cite{inexactAugLag19} considers the more general version of \eqref{optl0} where the constraints are (possibly) nonconvex.

We now describe other papers that are tangentially related to ours. 
Papers \cite{ErrorBoundJzhang-ZQLuo2020,ADMMJzhang-ZQLuo2020}  present a  primal-dual first-order algorithm under the assumption that $h$ is the indicator function
of a box (in \cite{ADMMJzhang-ZQLuo2020}) or a polyhedron (in \cite{ErrorBoundJzhang-ZQLuo2020}). 
Paper \cite{SZhang-Pen-admm} considers a penalty-ADMM method that solves an equivalent reformulation of \eqref{optl0}. 
Paper \cite{HybridPenaltyAugLag19} presents an inexact proximal point method applied to the function defined as $\phi(z)$ if $z$ is feasible and $+\infty$ otherwise. 
It can be viewed as an extension to the nonconvex setting
of the proximal point method (PPM) applied to
\eqref{optl0} (see, for example, \cite{rockafellar1976augmented} for the analysis of inexact versions of  PPMs for solving \eqref{optl0} in the convex setting). Paper \cite{Lan-ConstrainedStocasticProxMetNonconvex2019} considers a  primal-dual proximal point  scheme for computing approximate stationary solution to a  constrained  NCO problem and  analyzes  its iteration-complexity under different assumptions.  

Before closing this review, we present the assumptions of some of the above methods in Table~\ref{tab:conditions} and give a summary of these methods in
Table~\ref{tab:summary_tbl}, which compares the best iteration complexities, necessary
conditions, and various parameter ranges. 
\begin{table}[!tbh]
\begin{centering}
\begin{tabular}{c>{\raggedright}m{0.65\columnwidth}}
\toprule 
{\scriptsize{}${\cal B}$} & {\scriptsize{}Either (i) the quantity $\sup_{x\in\dom h}|\phi(x)|$
is finite, (ii) $\dom h$ is bounded, and/or (iii) the feasible set
is bounded.}\tabularnewline
\midrule 
{\scriptsize{}${\cal A}$} & {\scriptsize{}If the constraints have an affine component of the form
$Ax=b$ then $A$ has full row rank.}\tabularnewline
\midrule 
{\scriptsize{}${\cal F}$} & {\scriptsize{}There exists some $\nu>0$ such that $\nu \|Ax_{k}-b\| \leq {\rm dist}(0,A^{*}(Ax_{k}$ $-b)+c_{k}^{-1}\partial h(x_{k}))$
for generated iterates $\{x_{k}\}_{k\geq1}$ and
$\{c_{k}\}_{k\geq1}$.}\tabularnewline
\midrule 
{\scriptsize{}${\cal N}$} & {\scriptsize{}The function $h$ restricted to its domain is $r$-Lipschitz
continuous.}\tabularnewline
\midrule 
{\scriptsize{}${\cal SP}$} & {\scriptsize{}There exists $\bar{x}\in{\rm int}(\dom h)$
such that $Ax=b$.}\tabularnewline
\bottomrule
\end{tabular}
\par\end{centering}
\caption{Abbreviations for common boundedness and regularity conditions. See
Lemma~\ref{Prop:eps-subd-bounded-cone} for the result that ${\cal N}$ is equivalent to requiring that,
for every $x\in\protect\dom h$, there exists $r>0$ such that that
$\protect\partial h(x)\subseteq{\cal N}_{\protect\dom h}(x)+{\cal B}_{r}$
where ${\cal B}_{r}=\{x:\|x\|\protect\leq r\}$. \label{tab:conditions}}
\end{table}
\begin{table}[!tbh]
\begin{centering}
\begin{tabular}{ccccccc}
\toprule 
\textbf{\scriptsize{}Name} & \textbf{\scriptsize{}Best Complexity} & {\scriptsize{}$\lam_k$} & \textbf{\scriptsize{}$\theta$} & \textbf{\scriptsize{}$\chi_{k}$} & \textbf{\scriptsize{}Conditions} & \tabularnewline
\midrule 
{\scriptsize{}QP-AIPP \cite{WJRproxmet1}} & {\scriptsize{}${\cal O}(\varepsilon^{-3})$} & {\scriptsize{}$\Theta(m_{f}^{-1})$} & {\scriptsize{}-} & {\scriptsize{}-} & {\scriptsize{}None} \tabularnewline
{\scriptsize{}R-QP-AIPP \cite{WJRComputQPAIPP}} & {\scriptsize{}$\tilde{{\cal O}}(\varepsilon^{-3})$} & {\scriptsize{}$(0,\infty)$} & {\scriptsize{}-} & {\scriptsize{}-} & {\scriptsize{}${\cal B}$}\tabularnewline
{\scriptsize{}iPPP \cite{PPmetNonconvex2019}} & {\scriptsize{}$\tilde{{\cal O}}(\varepsilon^{-5/2})$} & {\scriptsize{}${\cal O}(m_{f}^{-1})$} & {\scriptsize{}-} & {\scriptsize{}-} & {\scriptsize{}${\cal B},{\cal N},{\cal SP}$}\tabularnewline
\midrule 
{\scriptsize{}iALM (2019) \cite{inexactAugLag19}} & {\scriptsize{}$\tilde{{\cal O}}(\varepsilon^{-3})$} & {\scriptsize{}-} & {\scriptsize{}$0$} & {\scriptsize{}${\cal O}(c_k^{-1})$} & {\scriptsize{}${\cal B},{\cal F}$}\tabularnewline
{\scriptsize{}iALM (2020) \cite{ImprovedShrinkingALM20}} & {\scriptsize{}$\tilde{{\cal O}}(\varepsilon^{-5/2})$} & {\scriptsize{}-} & {\scriptsize{}$0$} & {\scriptsize{}${\cal O}(c_k^{-1})$} & {\scriptsize{}${\cal B},{\cal F}$}\tabularnewline
{\scriptsize{}PProx-PDA\tablefootnote{This method generates prox subproblems of the form $\argmin_{x\in X}\{\lam h(x) + c\|Ax-b\|^2 / 2 + \|x-x_0\|^2 / 2 \}$ and the analysis of \cite{HongPertAugLag}
makes the strong assumption that they can be solved exactly for any $x_0$, $c$, and $\lam$.} \cite{HongPertAugLag}} & {\scriptsize{}${\cal O}(\theta^{-2}\varepsilon^{-4})$} & {\scriptsize{}${\cal O}(\theta L_{f}^{-1})$} & {\scriptsize{}$(0,1)$} & {\scriptsize{}$1$} & {\scriptsize{}${\cal B}$, ${\cal A}$}\tabularnewline
{\scriptsize{}$\theta$-IPAAL\tablefootnote{It is also shown that conditions $\cal N$ and $\cal SP$ can be removed to yield a  complexity of $\tilde{{\cal O}}(\theta^{-7/2}\varepsilon^{-3})$.} \cite{RJWIPAAL2020}} & {\scriptsize{}$\tilde{{\cal O}}(\theta^{-15/4}\varepsilon^{-5/2})$} & {\scriptsize{}$\Theta(\theta m_{f}^{-1})$} & {\scriptsize{}$(0,1)$} & {\scriptsize{}$1$} & {\scriptsize{}${\cal N},{\cal SP}$}\tabularnewline
\textbf{\scriptsize{}IAIPAL
}
& {\scriptsize{}$\tilde{{\cal O}}(\varepsilon^{-5/2})$} & {\scriptsize{}$\Theta(m_{f}^{-1})$} & \textbf{\scriptsize{}$0$} & \textbf{\scriptsize{}$\{0,1\}$\tablefootnote{Specifically, $\chi_k=1$ if $k\equiv 1\bmod k_\circ$ and $\chi_k=0$ otherwise.}} & {\scriptsize{}${\cal B},{\cal N},{\cal SP}$}\tabularnewline
\bottomrule
\end{tabular}
\par\end{centering}
\caption{Comparison of relevant penalty and AL-based methods with IAIPAL. For
simplicity, we let $\varepsilon=\min\{\hat{\rho},\hat{\eta}\}$, and
let $\tilde{{\cal O}}(\cdot)$ be the same as ${\cal O}(\cdot)$ with
all logarithmic dependencies on $\varepsilon$ removed. }
\label{tab:summary_tbl}
\end{table}

\noindent\textit{Contributions.} 
Under the assumption that the domain of $h$ is bounded,  has nonempty interior, and
	\eqref{optl0} has a point $\bar z \in \inte(\dom h)$ such that $A\bar z=b$,
	it is shown that if $\alpha = \Theta(1)$ then the total
	ACG iteration complexity
	of IAIPAL, up to logarithmic terms,  is 
	\begin{equation}
	\label{eq:contrib_compl}
    {\cal O}\left(\frac{1}{\hat\rho^{5/2}}+\frac{1}{ \sqrt{\hat\eta}\hat\rho^{2}}\right).
	\end{equation} 
    which is equal, up to logarithmic terms, to the ones obtained for the methods in \cite{ImprovedShrinkingALM20, PPmetNonconvex2019, RJWIPAAL2020} (see Table~\ref{tab:summary_tbl}). On the other hand, if $\alpha=1/c$, i.e., a full multiplier update is performed at every step of IAIPAL in view of \eqref{def:pk0}, then it is shown that 
 	the above complexity becomes ${\cal O}(\hat\rho^{-3}+\hat\rho^{-2}\eta^{-1/2})$.
	Since each ACG iteration of IAIPAL requires ${\cal O}(1)$ resolvent evaluations of $h$ and/or gradient evaluations of $f$, the above complexities
	also bound the number of $h$-resolvent computations
	(i.e., evaluations of $(I+ \eta \partial h)^{-1}$
for $\eta>0$)
and gradient evaluations of $f$ performed by
	IAIPAL. It is also worth mentioning
	that all of the above results hold
    without assuming that the
	initial point $z_0 \in \dom h$ is feasible, i.e., $z_0$ also
	satisfies $Az_0=b$.
	
	We now emphasize four important theoretical aspects of this paper.
	First, it establishes (for the \textit{first} time) the
    iteration-complexity of a PAL method (specifically IAIPAL with $\alpha=1/c$) for solving \eqref{optl0} which makes a full multiplier update (i.e., $(\theta,\chi_k)=(0,1)$ for every $k$) after solving each prox subproblem, does not assume boundedness of the multiplier sequence $\{p_k\}$, and contains a novel rule for updating the penalty parameter.
	Second, the proof that the sequence of Lagrange multipliers is bounded does not use
potential function arguments (e.g. as in \cite{HongPertAugLag,ProxAugLag_Ming,RJWIPAAL2020}), restrict the size of $\chi_k$ in \eqref{eq:gen_dual_update2} (e.g. as in \cite{ImprovedShrinkingALM20,inexactAugLag19}), and/or limit the number of multiplier updates (e.g. as in \cite{ImprovedShrinkingALM20,inexactAugLag19}). 
Third, in contrast to
the PAL methods of \cite{HongPertAugLag,RJWIPAAL2020},
whose iteration-complexities and stepsizes tend to $\infty$ and $0$, respectively, as $\theta$ tends to $0$ (see the second column in Table~\ref{tab:summary_tbl}),
the complexity and stepsize of IAIPAL do not depend on $\theta$. Fourth, in contrast to the non-proximal AL methods of \cite{ImprovedShrinkingALM20, inexactAugLag19}, which make  strong assumptions on the generated iterates (see condition $\cal F$ in Table~\ref{tab:conditions}), the convergence of IAIPAL only assumes a mild Slater-like condition and Lipschitz continuity of $h$ on its domain.

	It is also worth mentioning that the numerical experiments of Section~\ref{sec:numerical}, and the conclusions thereof, show that IAIPAL with $\alpha=\Theta(1)$ and $\alpha=\Theta(1/c)$ substantially outperforms other algorithms in the literature for solving \eqref{optl0}
	(or special cases of it) with equal (e.g., \cite{WJRproxmet1,WJRComputQPAIPP,PPmetNonconvex2019,RJWIPAAL2020}) or better (e.g., \cite{ErrorBoundJzhang-ZQLuo2020,ADMMJzhang-ZQLuo2020}) iteration complexities.
\\

\noindent\textit{Organization of the paper}.  Subsection~\ref{sec:bas}  provides some basic definitions and notation. Section~\ref{sec:IAIPALmethods} contains three subsections. The first one presents our main problem of interest and the assumptions made on it. The second one  states S-IAIPAL and its main iteration-complexity result. Subsection~\ref{Sec:dynamic-IPAAL} states IAIPAL and establishes its iteration-complexity bound. Section~\ref{sec:technical} is devoted to the proof of the iteration-complexity result of S-IAIPAL and some related technical results. Section~\ref{sec:numerical}
 presents some numerical experiments comparing IAIPAL with other benchmarks algorithms for solving \eqref{optl0}. Section~\ref{sec:conclusion} contains some concluding remarks. Finally, an appendix section is considered and it  is divided into three subsections. Subsection~\ref{sec-nesterov}  reviews an ACG method used to solve the S-IAIPAL subproblems. The second subsection contains a basic result of convex analysis, and the last subsection presents a basic lemma associated with a refinement procedure considered in S-IAIPAL.




	\subsection{Notation and basic definitions}
	\label{sec:bas}
	This subsection presents   notation and basic definitions used in this paper.
	
	Let $\mathbb{N}$ denote the set of positive integers. Let $\Re_{+}$  and $\Re_{++}$ denote the
	set of nonnegative and positive
	real numbers, respectively, and
	let $\Re^n$ denote the $n$-dimensional Hilbert space with inner product and associated norm denoted by $\inner{\cdot}{\cdot}$ and $\|\cdot\|$, respectively. We use $\Re^{l\times n}$ to denote the set
	of all $l\times n$ matrices and ${\mathbb S}_n^+$ to denote the set of positive semidefinite matrices in $\r^{n\times n}$.  
	The smallest positive singular value
	of a  nonzero linear operator $Q:\Re^n\to \Re^l$
	is denoted by $\sigma^+_Q$.
For a given closed convex set $X \subset \Re^n$, its boundary is denoted by $\partial X$ and the distance of a point $x \in \Re^n$ to $X$ is denoted by ${\rm dist}_X(x)$.
 For any $t>0$, we let $\log_1^+(t):=\max\{\log t, 1\}$ and  $\bar{B}(0,t):=\{z\in \Re^n:\|z\|\leq t\}$.
	
	The domain of a function $h :\Re^n\to (-\infty,\infty]$ is the set   $\dom h := \{x\in \Re^n : h(x) < +\infty\}$.
	Moreover, $h$ is said to be proper if
	$\dom h \ne \emptyset$. The set of all lower semi-continuous proper convex functions defined in $\Re^n$ is denoted by $\bConv{n}$. The $\varepsilon$-subdifferential of a proper function $h :\Re^n\to (-\infty,\infty]$ is defined by 
	\begin{equation}\label{def:epsSubdiff}
	\partial_\varepsilon h(z):=\{u\in \Re^n: h(z')\geq h(z)+\inner{u}{z'-z}-\varepsilon, \quad \forall z' \in \Re^n\}
	\end{equation}
	for every $z\in \Re^n$.	The classical subdifferential, denoted by $\partial h(\cdot)$,  corresponds to $\partial_0 h(\cdot)$.  Recall that, for a given $\varepsilon\geq 0$, the $\varepsilon$-normal cone of a closed convex set $C$ at $z\in C$, denoted by  $N^{\varepsilon}_C(z)$, is defined as  
	$N^{\varepsilon}_C(z):=\{\xi \in \Re^n: \inner{\xi}{u-z}\leq \varepsilon,\quad \forall u\in C\}.$
	If $\psi$ is a real-valued function which
	is differentiable at $\bar z \in \Re^n$, then its affine   approximation $\ell_\psi(\cdot,\bar z)$ at $\bar z$ is given by
	\begin{equation}\label{eq:defell}
	\ell_\psi(z;\bar z) :=  \psi(\bar z) + \inner{\nabla \psi(\bar z)}{z-\bar z} \quad \forall  z \in \Re^n.
	\end{equation}

	\section{The  IAIPAL method}\label{sec:IAIPALmethods}
This section is divided into three subsections. The first one discusses the problem of interest and describes the main
assumptions made on it. 
Subsection~\ref{Sec:static-IPAAL} presents S-IAIPAL and its main iteration-complexity result. Subsection~\ref{Sec:dynamic-IPAAL} presents IAIPAL and its overall ACG  iteration-complexity result. 
	\subsection{Problem of interest, assumptions and IAIPAL outline}\label{subsec:assump-Motivation}
	This subsection  describes
	the problem of interest, the assumptions made on it,
	and the type of  approximate stationary  solution we are interested
	in computing for it.

	The main problem of interest in this paper is \eqref{optl0} 
	where  $f, h:  \Re^{n} \to (-\infty,\infty]$,
	${A} : \Re^n \to \Re^l$ and $b \in \Re^l$ satisfy the following assumptions:
	\begin{itemize}
		\item[{\bf(B1)}] $A$ is a nonzero linear operator;
		\item[{\bf(B2)}] $h\in \bConv{n}$ is $L_h$-Lipschitz continuous on ${\mathcal H}:=\dom h$;
		\item[{\bf(B3)}] the diameter $D:=\sup \{ \|z-z'\| : z, z' \in {\mathcal H} \}$ of ${\mathcal H}$ is finite and there exists
		$\nabla_f \ge 0$ such that
		$\|\nabla f(z) \| \le \nabla_f$ for every $z \in {\mathcal H}$;
		\item[{\bf(B4)}] there exists $\bar z \in \inte({\mathcal H})$ such that $A\bar z=b$;
		\item[{\bf(B5)}] $f$ is nonconvex and  differentiable on $\rn$, and there exist $L_f\geq m_f > 0$ such that, for all $z,z' \in \rn$,
			\begin{align}\label{gradLips}
		\|\nabla f(\z') -  \nabla f(\z)\| &\le L_f \|z'-z\|,\\
		f(z') -\ell_f(z';z) &\ge - \frac{m_f}2 \|z'-z \|^2.\label{lowerCurvature-m}
		\end{align}
		\end{itemize} 
	
	
		Some comments about assumptions
		{\bf (B1)}--{\bf (B5)} are in order. First,
		it is shown in Lemma~\ref{Prop:eps-subd-bounded-cone} that
$\partial_{\varepsilon}h(z) \subset \bar{B}(0,L_h)+N^{\varepsilon}_{{\mathcal H}}(z)$ for every $z \in {\mathcal H}$. This inclusion will be used to bound the sequence  of Lagrangian multipliers  generated by the IAIPAL method.
		 Second,  it is well known that \eqref{gradLips} implies that 
	$ |f(z') -\ell_f(z';z) | \le  L_f\|z'-z \|^2/2$ for every $z,z' \in \rn$, and hence that \eqref{lowerCurvature-m} holds with $m_f=L_f$. However, better
	iteration-complexity bounds
	can be derived when a scalar
	$m_f< L_f$ 
	satisfying \eqref{lowerCurvature-m}
	is available.
	  Third, \eqref{lowerCurvature-m} implies that the function $f(\cdot)+ m_f \|\cdot\|^2/2$ is convex on $\rn$. Moreover, since $f$ is nonconvex on $\rn$ in view of {\bf(B5)}, the smallest $m_f$ satisfying  \eqref{lowerCurvature-m} is positive. 
Fourth, 
any function $f$ of the form $h=\tilde h + \delta_Z$
where $\tilde h$ is a finite everywhere Lipschitz continuous
convex function
and $Z$ is a compact convex set clearly satisfies condition (B2).
Finally, the existence of a scalar $\nabla_f$  as  in  {\bf (B3)} is
actually not an extra assumption since,
using
\eqref{gradLips} and 
the boundedness of ${\mathcal H}$ in  {\bf (B3)},
it can be easily seen
that for any $y \in {\mathcal H}$,
the scalar
$\nabla_f = \nabla_{f,y} := \|\nabla f(y)\| +
L_f D$ majorizes $\|\nabla f(z)\|$ for any
$z \in {\mathcal H}$.

	It is well known that, under some mild conditions, if $\bar{z}$ is a local minimum of \eqref{optl0}, then there exists
	$\bar{\lm} \in \Re^l$ such that $ (\bar{z}, \bar{\lm})$ is a stationary solution of \eqref{optl0}, i.e., 
	\begin{equation}\label{estacionarypoint}
	0\in  \nabla f(\bar{z})+\partial h(\bar{z})+A^*\bar{\lm},
	\quad A\bar{z}-b=0.
	\end{equation}
	The main complexity results of this paper are stated in terms of the following notion of approximate  stationary solution
	which is a natural relaxation of \eqref{estacionarypoint}.

	\begin{definition}\label{def:stationarypoint}
		Given a tolerance pair  $(\hat\rho,\hat\eta)\in \Re_{++}\times\Re_{++}  $, a triple $(\hat z, \hat \lm,\hat w)\in {\mathcal H}\times\Re^l\times\Re^n$ is said to be a $(\hat\rho,\hat\eta)$-approximate stationary  solution of \eqref{optl0} if it satisfies \eqref{eq:approx_stationary1'}.
		\end{definition}

	\subsection{The S-IAIPAL method}\label{Sec:static-IPAAL}
	
This subsection describes S-IAIPAL,
which essentially corresponds to
to some group of all consecutive iterations
of the general IAIPAL method outlined in Section~\ref{sec:int} (see the paragraph containing \eqref{exactzk0}-\eqref{def:pk0}) 
for which the penalty parameter $c$ stays
constant.


        Recall from the outline given
      in the Introduction that S-IAIPAL generates
        a sequence $\{(z_k,p_k)\}$ according to \eqref{exactzk0} and \eqref{def:pk0} where $\lambda=1/(2m_f)$.
        The formal description of S-IAIPAL below requires that,
        for a pre-specified scalar $\tilde \sigma>0$, the
        approximate solution $z_k$ of subproblem \eqref{exactzk0},
        together with some residual pair $(v_k,\varepsilon_k)\in \Re^n\times\Re_{++}$, satisfy
        \begin{equation}\label{GIPPinc_ineq}
		v_k \in \partial_{\varepsilon_k} \left ( \lam {\cal L}_c(\cdot,\lm_{k-1}) + \frac12 \|\cdot-\z_{k-1}\|^2 \right) (\z_k),
		\qquad \| v_k\|^2 + 2 \varepsilon_k \le \tilde \sigma^2 \|r_k\|^2
		\end{equation}
		where
		\begin{equation}\label{def:rk}
		r_k:= z_{k-1}-z_k+v_k.
		\end{equation}
		
		We now make some remarks about the above notion of approximate solution for \eqref{exactzk0}.
		First, even though $\tilde \sigma$ is assumed to positive, it is worth noting that if $\tilde \sigma$ were equal to zero, then \eqref{GIPPinc_ineq} would immediately imply that $z_k$ is the exact solution of \eqref{exactzk0}.
		Hence, the aggregated error $\| v_k\|^2 + 2 \varepsilon_k$ of the residual pair $(v_k,\varepsilon_k)$ can be thought as an inexactness measure of the approximate solution $z_k$,
		and the inequality in \eqref{GIPPinc_ineq} is  a relative error condition on it.
		Second, as will be seen in Proposition~\ref{Prop:SIAIPAL-ACG} below,
		a triple $(z_k,v_k,\varepsilon_k)$ satisfying \eqref{GIPPinc_ineq}
		can be found by suitably applying the ACG method
		described in Subsection~\ref{sec-nesterov} to
		subproblem \eqref{exactzk0}.
		

We now formally describe S-IAIPAL.

    \vgap
	\hrule
	\vspace*{0.5em}
	\noindent\quad{\bf S-IAIPAL}
	\vspace*{0.5em}
	\hrule
	\vspace*{0.5em}
	\begin{itemize}
		\item[(0)] Let  scalars  $\nu>0$
		and $\sigma \in (0, 1/\sqrt{2}]$,
		initial point $z_0 \in {\mathcal H}$,
		tolerance pair $(\hat \rho,\hat \eta) \in \Re_{++}\times\Re_{++}$, penalty parameter $c \geq 0$, and $\alpha > 0$
		be given; set $k=1$, $p_0=0$, and
		\begin{equation}\label{definition of sigma}
		\begin{gathered}
		C_1=\frac{2\left(1+ 2\nu\right)^2}{1-\sigma^2}, \quad \lambda=\frac{1}{2m_f},\quad\sigma_c= \min\left\{\frac{\nu}{\sqrt{\lambda L_c+1}},\sigma \right\}, \\
	L_c=L_f+c\|A\|^2;
		\end{gathered}
		\end{equation}
			\item [(1)] 
		use the ACG method described in  Subsection~\ref{sec-nesterov} with inputs
		\begin{align}
		x_0&=z_{k-1}, \quad \tilde \sigma=\sigma_c, \quad
		({\widetilde \mu},{\widetilde M})= (1/2,\lam L_c+1), \label{eq:Ms-mu}\\
		(\psi^{(s)},\psi^{(n)})&= 
	\left( \lam [\mathcal{L}_c(\cdot,p_{k-1})-h] +\frac{1}{2}\|\cdot-z_{k-1}\|^2\,,\, 
	\lam h \right)\label{eq:psiS-psimu}
	\end{align}
		to obtain a triple $(z_k,v_k,\varepsilon_k)$ satisfying \eqref{GIPPinc_ineq} with $\tilde \sigma = \sigma_c$, and set
		\begin{equation}
		\label{def:pk}
		q_k = p_{k-1} + c(Az_k - b), \quad \lm_k = \begin{cases}
                q_k, & k\equiv 1\bmod \lceil\alpha c\rceil,\\
                p_{k-1}, & \text{otherwise};
        \end{cases}
		\end{equation}
		\item[(2)] compute $(\hat z_k,\hat \lm_k, \hat w_k)$ as 
		\begin{gather}
        \hat  z_k = \argmin_u \left\{\langle \lambda \left[\nabla f(z_k)+A^*q_k\right]-r_k, u\rangle+ \lambda h(u)+ \frac{\lambda L_c+1}{2}\|u-z_{k}\|^2\right \}, \nonumber \\
		 \hat \lm_k =p_{k-1}+c(A\hat z_k -b ),\label{def:pkhat} \\
		\hat w_k =  \frac{1}{\lambda}r_k + \left(L_c + \frac{1}{\lam} + cA^*A\right)(\hat z_k-z_k) +  \nabla f(\hat z_k)-\nabla f(z_k) ,\label{def:wkhat}
		\end{gather}
		where $r_k$ is as in \eqref{def:rk};
		if $\|\hat w_k\| \le \hat \rho$ and
		$\|A\hat z_k - b\| \le \hat \eta$,
		then {\bf stop with success}  and output
		$(\hat z, \hat \lm,\hat w)=(\hat z_k, \hat \lm_k,\hat w_k)$;
		\item[(3)] if $k\geq 2$ and
		\begin{equation}\label{definition:Deltak}
		\Delta_k = \frac{{\cal L}_\pen(z_1,\lm_{1}) - {\cal L}_\pen(z_k,\lm_{k})}{k-1}\leq \frac{\lam\hat \rho^2}{2C_1},
		\end{equation}
		then {\bf stop and  declare
		$c$ small};
	\item[(4)] 
	set
	$k \leftarrow k+1$, and go to step 1.
	\end{itemize}
	\vspace*{0.5em}
	\hrule
	\vgap


	We now make some trivial remarks about S-IAIPAL.
	First,  it performs  two types of iterations, namely,
	the outer ones indexed by $k$ and the ACG (or inner) ones performed
	during its calls to ACG in step~1. Second,
  the scalar $\lambda$ defined in step~0 ensures that the prox  augmented Lagrangian subproblem  \eqref{exactzk0} is strongly convex. Third, the scalars ${\widetilde M}$ and ${\widetilde \mu}$ in step~1 are the Lipschitz constant and the strong convexity parameter of $\nabla \psi_s$ and $\psi_n$, respectively. 
  Fourth, the update formula \eqref{def:pk} for the multiplier $p_k$ is the classical one where a full step is performed, i.e., no shrinking  factor multiplying  the term $c(Az_k-b)$ is included on it. Fifth, it follows immediately from \eqref{def:pk} and \eqref{def:pkhat} that
	\begin{equation}
	    \label{Relation:pkhat-pk}
	    \hat p_k-p_k=cA(\hat z_k-z_k) \quad \forall k\equiv 1\bmod \lceil \alpha c\rceil.
	\end{equation}
  
  We next make some comments about the logical structure of S-IAIPAL.
  First,  it is shown in
  Proposition~\ref{prop:approxsol} that every triple
  $(\hat z, \hat p,\hat w)=(\hat z_k, \hat p_k,\hat w_k)$ computed in step~2 satisfies the inclusion in \eqref{eq:approx_stationary1'}, and hence is a 
  $(\hat \rho,\hat\eta)$-approximate stationary solution of \eqref{optl0}
  (see Definition~\ref{def:stationarypoint})
  whenever S-IAIPAL  stops successfully (see the condition for that to happen at the end of step 2).
  Second, in contrast to
  the $k$-th generated triple
  $(\hat z_k,\hat p_k,\hat w_k)$, which is only used in step~2
  to test for possible termination,
  the $k$-th generated quadruple $(z_k,p_k,v_k,\varepsilon_k)$ found in step~1 is not only used to compute the above triple but also to perform the next iteration.
  Third, Theorem~\ref{theor:StaticIPAAL}(d) below shows that if the penalty parameter $c$ is sufficiently large at some iteration, then S-IAIPAL must successfully stop
  in its step~2.
  Finally, after the second iteration (and including it) of S-IAIPAL, inequality \eqref{definition:Deltak}  is used to  detect  whether the penalty parameter $c$ is small, in which case S-IAIPAL stops in its step~3 with the declaration that $c$ is small.
  IAIPAL, which is discussed in the next subsection, then uses this information to increase $c$ and restart
	S-IAIPAL with the new value of $c$ and
	with the initial point
	$z_0$ either set to be
	the same as in the previous
	S-IAIPAL call,
	i.e., $z_0$ is kept constant
	(cold S-IAIPAL restart), or
	set to be equal to $z_k$, 
	where $z_k$ is the iterate
	computed in step~1 of S-IAIPAL just before it declares $c$ small (warm S-IAIPAL restart).

 

The following result describes an upper bound on
the number of iterations performed during each call to ACG in step 1 of S-IAIPAL.
\begin{proposition}\label{Prop:SIAIPAL-ACG}
Each call to
the ACG method in step 1 of S-IAIPAL performs at most   
			\begin{equation}\label{eq:ACGbound-porouter}
			\left\lceil 5\left(\sqrt{\frac{2L_f}{m_f}}+\sqrt{\frac{c\|A\|^2}{m_f}}\right)\log^+_1 \mathcal{M}(c)
			\right\rceil
			\end{equation}
ACG iterations, where 
$\mathcal{M}(c)$ is given by
\begin{equation}\label{def:Thetac}
\mathcal{M}(c) = 2 \left[ \frac{3L_f}{m_f}+\frac{c\|A\|^2}{m_f} \right] \max\{ \nu^{-1},\sigma^{-1} \}.
\end{equation}
\end{proposition}
\begin{proof}
First note that the respective definitions of ($\lambda$, $\sigma_c$, $L_c$), $(\tilde \sigma,{\widetilde \mu},{\widetilde M})$, and ${\cal A}_{\widetilde\mu,\widetilde\sigma}$ in \eqref{definition of sigma}, \eqref{eq:Ms-mu}, and Proposition~\ref{prop:nest_complex}, together with the bounds $\sigma_c<1$ and $L_f/m_f \geq 1$ from the definition of $\sigma_c$ and {\bf (B5)}, imply that  
\begin{align*}
{\cal A}_{\widetilde\mu,\sigma_c}&=\frac{4(1+\sigma_c)^{2}}{\sigma_c^2}\leq\frac{16}{ \sigma_c^2}\leq 8\left(\frac{3L_f}{m_f}+\frac{c\|A\|^2}{m_f}\right){\max\{\nu^{-2},\sigma^{-2}\}}, \\
\widetilde M-\widetilde\mu&=\lambda L_c+\frac{1}{2}=\frac{L_f+c\|A\|^2}{2m_f}+\frac{1}{2}\leq\frac{1}{2}\left(\frac{2L_f}{m_f}+\frac{c\|A\|^2}{m_f}\right). 
\end{align*}
Hence,  \eqref{eq:ACGbound-porouter}   follows from  Proposition~\ref{prop:nest_complex},  the above inequalities, the definition of  $\mathcal{M}(c)$ in \eqref{def:Thetac}, and the fact that $\log^+_1(\cdot)\geq 1$.  

\end{proof}

The following quantities and constants will be used
in the  statement and
proof of the main result of this subsection
(Theorem~\ref{theor:StaticIPAAL} below).
\begin{align}
&C_2:=\frac{\sigma^2}{(1-\sigma)^2},\quad C_3:=\frac{1+\nu}{1-\sigma},\label{definition:C3}    \\
&\phi_*:=\inf_{z\in \Re^n} \phi(z),\quad \Delta\phi^*=\phi^*-\phi_*, \quad 
	\bar{d}:=\mbox{\rm dist}_{\partial {\mathcal H}} (\bar z),  \label{def:zeta} \\
	&\theta_A:=\frac{\|A\|}{\sigma^+_A}, \quad \theta_D = \frac{D}{\bar d},\label{eq:condNumb}\\
	 &\kappa_0:=2(L_h+\nabla_f) +(C_2+4C_3)m_fD,\label{def:kappa00} \\
	&\kappa_1:=2\sqrt{2C_1}\theta_A\theta_D\kappa_0, \qquad \kappa_2:=\left(\frac{5\|A\| \theta_A\theta_D \kappa_0}{2m_f} \right)^{1/2},\label{def:kappa1-kappa2}
\end{align}
where $\phi^*$ is as in \eqref{optl0}, $C_1$ is as in \eqref{definition of sigma},  $D$ and $\nabla_f$ are as in {\bf (B3)}, and  $\bar{z}$  is as in  {\bf (B4)}.
Note that $C_1$, $C_2$ and $C_3$ are constants depending
only on the input parameters $\nu$ and/or $\sigma$
of S-IAIPAL. Moreover, the constants $\kappa_0$,
$\kappa_1$ and $\kappa_2$ depend not only on the constants
$C_1$, $C_2$ and $C_3$, but also on the constants $D$, $\|A\|$, $L_h$, $m_f$, $\nabla_f$,
and the ones defined in \eqref{def:zeta} and \eqref{eq:condNumb},
which are all associated
with the instance of  \eqref{optl0} under consideration.
Constants $\kappa_1$ and $\kappa_2$ are in turn used
to describe a threshold value $\bar c$ (see \eqref{def:cbar} below)
such that if
$c \ge \bar c$ then
S-IAIPAL is guaranteed to terminate with a $(\hat\rho,\hat\eta)$-approximate stationary  solution
of \eqref{optl0} (see statement (d) below).




 Next we state the main result about S-IAIPAL, whose proof is given  at the end of Section~\ref{sec:technical}.

\begin{theorem}\label{theor:StaticIPAAL} Assume that $c\geq m_f / \|A\|^2$ and that conditions  {\bf (B1)}--{\bf (B5)} hold.
Then, the following statements about S-IAIPAL hold:

\begin{itemize}
    \item[a)]  every iterate
$(\hat z_k,\hat p_k,\hat w_k)$  with $k\geq 1$ satisfies
$
\hat w_k\in \nabla f(\hat z_k)+\partial h(\hat z_k)+ A^* \hat p_k;
$
\item[b)]
	the number of outer iterations is bounded by
\begin{equation}
    \label{eq:outernumberiter}
    T_0(\hat \rho)
    := \left \lceil 1 + \frac{12 C_1 m_f\left(\Delta\phi^*+2m_fD^2\right)+ \kappa_1^2/4}{\hat\rho^2} \right \rceil,
	\end{equation}
where $C_1$, $\Delta\phi^*$, $\theta_D$, $\kappa_1$, $D$, and $m_f$ are as in step~0 of S-IAIPAL, \eqref{def:zeta}, \eqref{eq:condNumb}, \eqref{def:kappa1-kappa2}, \textbf{(B3)}, and \textbf{(B5)}, respectively; hence, the  total number of ACG iterations is bounded by
\begin{equation}\label{TotalACG_perouter}
T_{ACG}(c,\hat\rho):=\left\lceil 5 \left ( \sqrt{\frac{2L_f}{m_f}}   +  \sqrt{\frac{c\|A\|^2}{m_f} }\right)\log^+_1\mathcal{M}(c) \right\rceil
T_0(\hat \rho),
\end{equation}
where $L_f$ and $\mathcal{M}(c)$ are as in \textbf{(B5)} and \eqref{def:Thetac}, respectively.
\end{itemize}
Moreover, if the penalty parameter $c$ satisfies
\begin{equation}
\label{def:cbar-eta-rho}
c \geq   \frac{\kappa_2^2 m_f}{\hat{\eta} \|A\|^2}, \quad c\lceil{c\alpha} \rceil
\geq \frac{4 m_f \kappa_1^2}{\hat{\rho}^2\|A\|^2}
\end{equation}
then the following statements also hold:
\begin{itemize}
\item [c)] every  iterate
$(\hat z_k,\hat p_k,\hat w_k)$  with $k\geq 1$ satisfies
$
 \|A\hat z_k - b \| \le \hat \eta; 
$
\item[d)] S-IAIPAL stops successfully in step~2 with
a $(\hat\rho,\hat\eta)$-approximate stationary solution $(\hat z,\hat p,\hat w)$
of \eqref{optl0}.
\end{itemize}	  
\end{theorem}




We now make some  remarks about Theorem~\ref{theor:StaticIPAAL} under the condition that the penalty parameter $c$ satisfies $\hat{c}_\alpha \le c= {\mathcal O}(\hat{c}_\alpha)$ where 
\begin{equation}\label{def:cbar}
\hat{c}_\alpha = \hat{c}_\alpha(\hat{\rho},\hat{\eta}) := \min \left\{c: c \mbox{ satisfies \eqref{def:cbar-eta-rho}}\right\}.
\end{equation}
First,  it follows  from parts b) and d) that
S-IAIPAL obtains a $(\hat\rho,\hat\eta)$-approximate stationary  solution of \eqref{optl0} in $\mathcal{O}\left(T_{ACG}(\hat{c}_\alpha,\hat \rho)\right)$
ACG iterations, where $T_{ACG}(c,\hat \rho)$ is as in \eqref{TotalACG_perouter}.
Second, under the reasonable assumption that right-hand-side of the second bound in \eqref{def:cbar-eta-rho} is $\Omega(1)$, it is easy to show that 
\begin{align}
\label{eq:c_alpha_bd1}
& \hat{c}_{\alpha}(\hat{\rho},\hat{\eta}) =  
\Theta\left(\frac{\sqrt{m_f}}{\|A\|} \max \left\{\frac{\kappa_2^2 \sqrt{m_f}}{\hat{\eta}\|A\|}, 
\frac{\kappa_1}{\hat\rho}\right\}\right) 
\quad 
\text{if } \alpha= \Theta(1), \\
\label{eq:c_alpha_bd2}
& \hat{c}_{\alpha}(\hat{\rho},\hat{\eta}) =
\Theta\left(\frac{m_f}{\|A\|^2}\max\left\{\frac{\kappa_2^2}{\hat{\eta}},
\frac{\kappa_1^2}{\hat\rho^2}\right\}\right) 
\quad 
\text{if } \alpha=\Theta(1/c).
\end{align}
This remark together with the fact that $T_0(\hat \rho)=\mathcal{O}(\hat\rho^{-2})$ then imply that the ACG iteration complexity of S-IAIPAL, up to a logarithmic term, is
${\cal O}(\hat\rho^{-5/2}+\hat\rho^{-2}\hat\eta^{-1/2})$ when
$\alpha = \Theta(1)$,
and  ${\cal O}(\hat\rho^{-3}+\hat\rho^{-2}\hat\eta^{-1/2})$
when $\alpha={\Theta}(1/c)$. 
Third, the number of iterations where S-IAIPAL performs a full multiplier update (i.e., $p_k = q_k$) is ${\cal O}(\hat\rho^{-2}[\alpha c]^{-1})$. In particular, if $\alpha = \Theta(1)$ and $\hat\rho = \hat\eta$, then the number of full multiplier updates is ${\cal O}(\hat\rho^{-1})$ when $c= \Theta(\hat{c}_\alpha)$ and is ${\cal O}(\hat\rho^{-2})$ when $c= \Theta(1)$.
Fourth, 
since the threshold $\hat{c}_\alpha$ in \eqref{def:cbar} is not computable in practice, it is not clear how one can
choose a penalty parameter $c$ such that $\hat{c}_\alpha \le c= {\mathcal O}(\hat{c}_\alpha)$. 


The next subsection presents IAIPAL which repeatedly invokes S-IAIPAL with increasing penalty parameter values until a $(\hat \rho,\hat\eta)$--approximate solution of \eqref{optl0} is obtained. 
Moreover, it is shown that, up to a logarithmic term, the overall number of ACG iterations performed by this scheme is
the same as the one of S-IAIPAL under the condition $\hat{c}_\alpha \le c={\mathcal O}(\hat{c}_\alpha)$.

	\subsection{The IAIPAL method}\label{Sec:dynamic-IPAAL}
This subsection describes the IAIPAL method and  establishes its ACG iteration-complexity.

The statement of IAIPAL below
makes use of S-IAIPAL presented in
Subsection~\ref{Sec:static-IPAAL}. More specifically, it consists of  
repeatedly invoking S-IAIPAL
with $c=c_\ell:=c_1 \tau^{\ell-1}$
where $c_1$ is an initial choice
for the penalty parameter, $\tau>1$, and
$\ell$ is the
S-IAIPAL call count.

	\vgap
	\hrule
	\vspace*{0.5em}
	\noindent\quad{\bf IAIPAL}
	\vspace*{0.5em}
	\hrule
	\vspace*{0.5em}
	\begin{itemize}
		\item[(0)] Let  a quadruple of scalars  $(\nu,\sigma,\tau)\in \Re_{++}\times(0,1/\sqrt{2}]\times(1,+\infty)$ and  a pair of tolerances $(\hat \rho, \hat \eta)\in \Re_{++}\times\Re_{++}$ be given; choose 
		$\pen_1 > 0$ and set $\ell \gets 1$;
		\item[(1)] choose an initial point $z_0^{(\ell)} \in {\mathcal H}$ and some $\alpha_\ell \in\Re_{++}$; call 
		S-IAIPAL  with inputs $z_0 =z_0^{(\ell)}$, 
		$\nu$, $\sigma$, $\hat \rho$, $\hat \eta$, $c = c_\ell$, and 
		$\alpha = \alpha_\ell$;
 		\item[(2)] if 	S-IAIPAL successfully stops with a triple $(\hat z, \hat p,\hat w)$, then output this triple and stop; otherwise,  set $c_{\ell+1}\leftarrow \tau c_\ell$, set $\ell \gets \ell + 1$, and return to step~1.  
	\end{itemize}
	\vspace*{0.5em}
	\hrule
	\vgap
	
	We now make some remarks about IAIPAL.
	First, the initial point $z_0^{(\ell)}$ chosen in step 1 can either be the same point (cold start) across all S-IAIPAL calls or
	a varying point. 
	In the latter case, 
	a simple approach (warm start)
	is to choose $z_0^{(\ell)}$ as  the last iterate computed in the most recent call to S-IAIPAL. 
	Second, every outer iteration within the $\ell$-th S-IAIPAL call uses the penalty parameter $c_\ell = c_1 \tau^{\ell-1}$.
	Third, if $\ell$-th 
	S-IAIPAL call does not
	successfully stop in step 2 or, equivalently, declares $c_\ell$ small in step~3 of S-IAIPAL, then the next penalty parameter $c_{\ell+1}$ is increased by a multiplicative factor $\tau>1$.
	Finally, $\alpha_\ell$ can be chosen as a constant in every execution of step 1, or it can change. 
	For example, choosing $\alpha_\ell = 1/c_\ell$ guarantees that a Lagrange multiplier update is performed at every outer iteration of an S-IAIPAL call.

	
	The following result establishes the overall ACG
	iteration-complexity for
	 IAIPAL to obtain
	a  $(\hat\rho,\hat\eta)$-approximate stationary  solution of \eqref{optl0}.

	\begin{theorem}\label{mainTheo2} Assume that conditions {\bf (B1)}--{\bf(B5)}  of
		Subsection \ref{subsec:assump-Motivation} hold and define the scalar
		\begin{equation}
		\label{eq:c_hat_def}
		\hat c(\hat{\rho},\hat{\eta}) := \sup_{\ell \geq 1} \hat{c}_{\alpha_\ell}(\hat{\rho}, \hat{\eta}),
		\end{equation}
		where $\hat{c}_{\alpha_\ell}(\cdot, \cdot)$ is as in \eqref{def:cbar}.
Then, IAIPAL  obtains a $(\hat\rho,\hat\eta)$-approximate stationary  solution $(\hat z,\hat \lm,\hat w)$ of problem~\eqref{optl0} in
	\begin{equation}
		\label{mainbound}
		\mathcal{O}\left(T_{ACG}(\hat{c}(\hat{\rho},\hat{\eta}) + c_1,\hat \rho) \cdot \log_1^{+}\left\{\frac{\hat{c}(\hat{\rho},\hat{\eta})}{c_{1}}\right\} \right)
	\end{equation}
ACG iterations, where $c_1$ is the initial penalty parameter in IAIPAL, $\kappa_1$ and $\kappa_2$ are as in \eqref{def:kappa1-kappa2}, and $T_{ACG}(\cdot,\cdot)$ is as in \eqref{TotalACG_perouter}. 
\end{theorem}
	
	\begin{proof}
First note that the  $\ell$-th loop of IAIPAL invokes  S-IAIPAL with  penalty parameter $c_\ell=\tau^{\ell-1} c_1$, for every $\ell \ge 1$. 
It is easy to see that if IAIPAL  stops in its first call to S-IAIPAL,  then the statement of the theorem follows trivially in view of the stopping criterion in step~2 of IAIPAL and  Theorem~\ref{theor:StaticIPAAL}(b). Suppose then that IAIPAL calls S-IAIPAL more than once and let $\hat{c} = \hat{c}(\hat{\rho},\hat{\eta})$. Defining the integer
\begin{equation}\label{ineq:cl}	
\bar{\ell}:=\min\left\{ \ell:  c_\ell \geq \hat{c} \right\},
\end{equation}
it follows from Theorem~\ref{theor:StaticIPAAL}(d) that a $(\hat \rho,\hat\eta)$-approximate solution of \eqref{optl0} is obtained in at most $\bar{\ell}\geq 2$ calls to S-IAIPAL. In view of the minimality in \eqref{ineq:cl} and the penalty update rule in step~2 of IAIPAL, we have $c_{\bar \ell} \leq \tau \hat c$ and, hence,
		\begin{equation}\label{ineq:aux-clbar}
		\bar{\ell}=\log_{\tau}(\tau^{\bar{\ell}})=\log_{\tau}\frac{\tau c_{\bar{\ell}}}{c_{1}}\leq\log_{\tau}\frac{\tau^{2}\hat{c}}{c_{1}}=2+\log_{\tau}\frac{\hat{c}}{c_{1}}.
		\end{equation} 	
Combining \eqref{ineq:aux-clbar}, the fact that $T_{ACG}(\hat c, \hat\rho) \geq T_{ACG}(\hat c_{\alpha_\ell}, \hat\rho)$ for $\ell \geq 1$, and Theorem~\ref{theor:StaticIPAAL}(b), we conclude that the number of ACG iterations of IAIPAL is on the same order of magnitude as in \eqref{mainbound}.
\end{proof}

\vgap	
	
	
We now make some remarks about Theorem~\ref{mainTheo2}.
First, it is easy to see that for fixed $(\hat{\rho},\hat{\eta})$, it holds that $\sup_{\alpha > 0} \hat{c}_\alpha(\hat{\rho}, \hat{\eta})$ is finite and, hence, $\hat c$ in \eqref{eq:c_hat_def} is also finite.
Second, its iteration-complexity does not depend on
how $z_0$ is selected in step 0. As a consequence,
it applies to both the
cold start and
the warm start approaches mentioned above.
Third, it follows from Theorem \ref{mainTheo2} that the total number of ACG iterations  of  IAIPAL is, up to a logarithmic term, the same as that of S-IAIPAL  with  penalty parameter $c$ such that $\hat{c}(\hat{\rho},\hat{\eta}) \leq c={\mathcal O}(\hat{c}(\hat{\rho},\hat{\eta}))$.  

The next result describes \eqref{mainbound} only in terms of $(\hat{\rho}, \hat{\eta})$ for two choices of $\alpha_\ell$.
	
\begin{corollary}
Assume that conditions {\bf (B1)}--{\bf(B5)}  of
Subsection \ref{subsec:assump-Motivation} hold and that $\max\{c_1, c_1^{-1}\} = {\cal O}(\hat{c}(\hat{\rho},\hat{\eta}))$, where $\hat{c}(\cdot,\cdot)$ is as in \eqref{eq:c_hat_def}. Then, IAIPAL obtains a $(\hat\rho,\hat\eta)$-approximate stationary solution of problem~\eqref{optl0} in a number of ACG iterations/resolvent evaluations bounded, up to a logarithmic term, by
\begin{align}
& {\cal O}(\hat\rho^{-5/2}+\hat\rho^{-2}\hat\eta^{-1/2}) \quad \text{if } \alpha_\ell=\Theta(1), \label{eq:alpha_choice1} \\
& {\cal O}(\hat\rho^{-3}+\hat\rho^{-2}\hat\eta^{-1/2}) \quad \text{if } \alpha_\ell=\Theta(1/c_\ell). \label{eq:alpha_choice2}
\end{align}
Consequently, if IAIPAL performs a multiplier update at every outer iteration of S-IAIPAL, i.e., choose $\alpha_\ell=1/c_\ell$ in step~1 of IAIPAL, then its ACG iteration complexity is as in \eqref{eq:alpha_choice2}.
\end{corollary}

\begin{proof}
This follows immediately from Theorem~\ref{mainTheo2},  the definition of $T_{ACG}$ in \eqref{TotalACG_perouter}, and \eqref{eq:c_alpha_bd1}--\eqref{eq:c_alpha_bd2}.
\end{proof}

\section{Proof of Theorem~\ref{theor:StaticIPAAL}}\label{sec:technical}

The goal of this section is to provide the proof of Theorem~\ref{theor:StaticIPAAL}
which describes the main properties of S-IAIPAL.


We start by motivating the results developed in this section.
	A major part
	of our effort lies in showing that
	 the residual  and the
    feasibility gap sequences $\{\|\hat w_k\|\}$
    and $\{\|A \hat z_k-b\|\}$
    generated
    by S-IAIPAL with penalty parameter $c$ satisfy
    
    \begin{equation}\label{ineq:auxresidualbound}
    \min_{i \le k} \|\hat w_i\|={\cal O} \left(\frac{1}{\sqrt{k}} + \frac{1}{\sqrt{\alpha} c}
\right),\quad  \|A \hat z_k-b\|={\cal O}\left(\frac{1}{c}\right),\quad \forall k\geq 2.
    \end{equation}
    Observe that \eqref{ineq:auxresidualbound} implies that there exists a
    range of sufficiently large values of $c$ satisfying $c^{-1} = {\cal O}(\min\{\hat\rho \sqrt{\alpha},\hat \eta\})$ and such that
    S-IAIPAL finds a $(\hat\rho,\hat\eta)$-approximate stationary  solution of \eqref{optl0} in ${\cal O}(\hat \rho^{-2})$ S-IAIPAL iterations.
    Using this observation together with Proposition \ref{Prop:SIAIPAL-ACG},
    it is now easy to see that there exists a significantly large 
    range of $c$'s for which
    the total number of ACG iterations performed by S-IAPIAL is  $\mathcal{O}(\hat\rho^{-5/2}+\hat\rho^{-2}\hat \eta^{-1/2})$, up to a multiplicative logarithmic term.
    Lemma~\ref{lem:wkhat-Deltak}(b) below
    establishes a key inequality
    towards proving the first relation  in \eqref{ineq:auxresidualbound},
    and the paragraph following this lemma outlines
    how
    this inequality is used to establish \eqref{ineq:auxresidualbound}.

The first technical result below describes some
important properties about the sequence $\{(\hat z_k,\hat p_k, \hat w_k)\}$ computed in step~2 of S-IAIPAL as well as other related sequences
which are also used
in the analysis of S-IAIPAL.

\begin{proposition}\label{prop:approxsol}
The following statements hold:
\begin{itemize}
    \item[a)]
the triple $(\hat z_k,\hat p_k,\hat w_k)$ 
generated in
step~2 of S-IAIPAL and  the residual $r_k$ defined in \eqref{def:rk} satisfy
		\begin{gather}\label{incl-ineq:wkhat}
\hat w_k \in \nabla f(\hat z_k) + \partial h (\hat z_k)+A^*\hat p_k,\\
 \label{ineq:zkhat-zk}
\lam \|\hat w_k\|\leq \left(1+ 2 \nu \right)\|r_k\|, \qquad  \|\hat z_k-z_k\| \leq   \frac{\nu}{2 (\lambda L_c+1)}\|r_k\|,
		\end{gather}
	where $\nu $ and  $L_c$ are  as in \eqref{definition of sigma};
	
		\item[b)]  the quadruple $(z_k,p_k,w_k, \varepsilon_k)$
		where
		$w_k$ is defined as
		\begin{equation}\label{def:wk-newdeltak}
 w_k:=\frac{1}\lam \left[(\lambda L_c+1)(z_k-\hat z_k) +r_k\right]
\end{equation}
and
		$(z_k,q_k,\varepsilon_k)$ is computed in step~1 of S-IAIPAL,	satisfies
		\begin{gather}
		\label{inclusion:wk}
		w_k\in  \nabla f(z_k) +  \partial_{(\lam^{-1}\varepsilon_k)} h (z_k)+A^*q_k, \\
		\label{ineq:wk-zk-zkhat}
		\lambda\|w_k\|\leq \left(1+\nu \right)\|r_k\|, \qquad \varepsilon_k\leq \frac{\sigma_c^2\|r_k\|^2}{2}
		\end{gather}
		where $ \sigma_c$ is  as in \eqref{definition of sigma}.
\end{itemize}
		\end{proposition}
		
\begin{proof} First note that the last inequality in \eqref{ineq:wk-zk-zkhat} follows immediately from  the inequality in \eqref{GIPPinc_ineq} with $\tilde \sigma=\sigma_c$ in view of step~1. Note also that the quantities  $(\tilde g,\tilde h)$,  $(z,\varepsilon)$, and $\tilde L$ defined as
\begin{align}\label{def:auxghtilde}
\tilde g:=\lam [\mathcal{L}_c(\cdot,p_{k-1})-h]&-\langle v_k, \cdot-z_k\rangle+\frac{1}{2}\|\cdot -z_{k-1}\|^2, \quad \tilde  h:=\lambda h,\\ (z,\varepsilon)&:=(z_k,\varepsilon_k), \quad \tilde L:=\lambda L_c+1
\end{align}
satisfy the assumptions of Lemma~\ref{lem:approxsolreps},
in view of {\bf (B2)}, {\bf (B5)}, \eqref{lagrangian2}, \eqref{def:epsSubdiff}, \eqref{eq:defell}, \eqref{gradLips}, and the inclusion in  \eqref{GIPPinc_ineq}.
Observe also that \eqref{def:rk}, \eqref{def:pk}, and \eqref{def:auxghtilde}  imply that
$\tilde z$ and $\widetilde w$ in \eqref{eq:def_zhat} are equal to
$\hat z_k$ and $(\lambda L_c+1)(z_k-\hat z_k)$,
respectively, and 
$\nabla \tilde g(z_k)= 
\lam[ \nabla f(z_k)+A^*q_k]-r_k$.
Hence, it follows from the conclusion of Lemma~\ref{lem:approxsolreps} that
\begin{align}
\label{eq:auxinclusions_prop}
(\lambda L_c+1)(z_k-\hat z_k )+r_k  &  \in \lambda [ \nabla f(z_k) + A^*q_k] + \partial (\lam h) (\hat z_k), \\
(\lambda L_c+1)(z_k-\hat z_k) + r_k &  \in \lam [ \nabla f(z_k) + A^*q_k] + \partial_{\varepsilon_k} (\lam h) (z_k) \label{eq:auxinclusion_prop2},\\
 (\lambda L_c+1)\|(z_k-\hat z_k)\|&\leq \sqrt{2(\lambda L_c+1)\varepsilon_k}\label{eq:auxineq_prop}.   
\end{align}
Hence,  inclusion \eqref{incl-ineq:wkhat} follows from  
\eqref{def:wkhat},  \eqref{Relation:pkhat-pk}, \eqref{eq:auxinclusions_prop}, and a well-known property of the
$\varepsilon$-subdifferential of a function which
follows directly from its definition \eqref{def:epsSubdiff}. Moreover, inclusion  \eqref{inclusion:wk} follows immediately from  
\eqref{def:wk-newdeltak} and \eqref{eq:auxinclusion_prop2}.
The first inequality in \eqref{ineq:wk-zk-zkhat} follows from \eqref{def:wk-newdeltak}, the Cauchy-Schwarz inequality, \eqref{eq:auxineq_prop}, the last inequality in \eqref{ineq:wk-zk-zkhat}, and the definition of $\sigma_c$ in \eqref{definition of sigma}.
Now, \eqref{gradLips},  \eqref{def:wkhat},  \eqref{def:wk-newdeltak},  \eqref{eq:auxineq_prop}, the definition of $L_c$ in \eqref{definition of sigma}, and the Cauchy-Schwarz inequality, imply that 
 \[
\lambda \|\hat w_k\|\leq \|\lambda w_k\| + \lambda(L_f + c \|A\|^2)\|\hat z_k-z_k\| \le  \|\lambda w_k\|+\lambda \sqrt{2(\lambda L_c+1)\varepsilon_k}.
 \]
 The  first inequality in \eqref{ineq:zkhat-zk} then follows from the above inequalities together with \eqref{ineq:wk-zk-zkhat} and the definition of $\sigma_c$ in \eqref{definition of sigma}.
 Finally, the second inequality in \eqref{ineq:zkhat-zk} follows
 immediately from \eqref{eq:auxineq_prop}, the last inequality in \eqref{ineq:wk-zk-zkhat}, and the definition of $\sigma_c$ in \eqref{definition of sigma}. 
\end{proof}

	We now make two remarks about
	Proposition~\ref{prop:approxsol}.
	First,
	the residual $w_k$ in \eqref{def:wk-newdeltak} 
	does not appear in the description
	of S-IAIPAL (and hence IAIPAL), 
	but it plays an important
	role in its analysis.
	More specifically, the residual pair
	$(w_k,\varepsilon_k)$, and the corresponding bounds
	developed for it in \eqref{ineq:wk-zk-zkhat},
	play a crucial role in proving that the
	sequence $\{p_k\}$ of Lagrange multipliers is bounded.
Second, the right hand sides of the inequalities in
\eqref{ineq:zkhat-zk} and \eqref{ineq:wk-zk-zkhat} are all expressed in terms of $\|r_k\|$ since
a substantial part of our analysis will
concentrate on deriving suitable bounds for it,
and hence for the quantities which are
bounded in \eqref{ineq:zkhat-zk} and \eqref{ineq:wk-zk-zkhat}.

The following technical result  derives an estimate on  $\{\|r_k\|\}$ in terms of the variation of
the augmented Lagrangian function
 along the sequence $\{(z_k,p_k)\}$ and  the  variation of the sequence of Lagrangian multipliers $\{p_k\}$.

	\begin{lemma}\label{aux:lemma-auglagb22}
		Let  $\{(z_k,\lm_k,v_k,\varepsilon_k)\}$ be generated by S-IAIPAL, let   $\{r_k\}$ be as in \eqref{def:rk}, and define $\{\Delta \lm_k\}$ as 
		\begin{equation}\label{def:Delta}
		\Delta \lm_k:=\lm_k-\lm_{k-1}, \qquad \forall k\geq 1.
		\end{equation}
		Then, the following inequality  holds for every $k\geq 1$:
		\begin{equation}\label{auglagDecreasing2} 
		\|r_k\|^2\leq \frac{2\lambda}{1-\sigma_c^2}\left({\cal L}_\pen(z_{k-1},\lm_{k-1})-{\cal L}_\pen(z_k,\lm_{k})+\frac{1}{\pen} \|\Delta\lm_k\|^2\right).
		\end{equation}
	\end{lemma}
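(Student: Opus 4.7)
The plan is to apply the $\varepsilon$-subgradient inequality for strongly convex functions to the prox-subproblem, evaluated at $z_{k-1}$, and then convert between the two different Lagrangian multipliers using the update rule \eqref{multipliers-tau}.

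First, I would observe that the objective
$\Phi_k(z) := \lambda {\cal L}_c(z, p_{k-1}) + \frac12\|z - z_{k-1}\|^2$
of the prox-subproblem is $1/2$-strongly convex. Indeed, the Hessian of the smooth quadratic part satisfies
$\lambda \nabla^2 f(z) + \lambda c A^* A + I \succeq -\lambda m_f I + I = (1/2) I$,
since $\lambda m_f = 1/2$ by the choice $\lambda = 1/(2m_f)$ in step 0, and the remaining pieces ($\lambda h$ and the linear multiplier term) are convex. Then from \eqref{GIPPinc_ineq}, $v_k\in\partial_{\varepsilon_k}\Phi_k(z_k)$, so the strong convexity version of the $\varepsilon$-subgradient inequality gives
\begin{equation*}
\Phi_k(z_{k-1}) \;\ge\; \Phi_k(z_k) + \langle v_k,\, z_{k-1}-z_k\rangle + \tfrac14\|z_{k-1}-z_k\|^2 - \varepsilon_k.
\end{equation*}

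Second, I would substitute $\Phi_k(z_{k-1}) = \lambda {\cal L}_c(z_{k-1},p_{k-1})$ and $\Phi_k(z_k) = \lambda {\cal L}_c(z_k,p_{k-1}) + \frac12\|z_k-z_{k-1}\|^2$, and use $z_{k-1}-z_k = r_k - v_k$ (from \eqref{def:rk}) to expand
$\frac12\|z_k-z_{k-1}\|^2 + \langle v_k, z_{k-1}-z_k\rangle = \tfrac12\|r_k\|^2 - \tfrac12\|v_k\|^2$.
Combined with the bound $\|v_k\|^2 + 2\varepsilon_k \le \sigma_c^2\|r_k\|^2$ from \eqref{GIPPinc_ineq}, this yields
\begin{equation*}
\lambda[{\cal L}_c(z_{k-1},p_{k-1}) - {\cal L}_c(z_k,p_{k-1})] \;\ge\; \tfrac{1-\sigma_c^2}{2}\|r_k\|^2.
\end{equation*}

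Third, I would convert ${\cal L}_c(z_k,p_{k-1})$ into ${\cal L}_c(z_k,p_k)$. From the multiplier update \eqref{multipliers-tau} and \eqref{def:Delta}, $Az_k-b = \Delta p_k / c$, so
\begin{equation*}
{\cal L}_c(z_k,p_k) - {\cal L}_c(z_k,p_{k-1}) \;=\; \langle \Delta p_k,\, Az_k-b\rangle \;=\; \tfrac{1}{c}\|\Delta p_k\|^2.
\end{equation*}
Plugging this into the previous inequality and rearranging gives exactly \eqref{auglagDecreasing2}.

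The only subtle point is getting the strong convexity modulus right (which hinges on the specific choice $\lambda = 1/(2m_f)$); once that is in hand, everything else is an algebraic manipulation using $r_k = v_k + z_{k-1} - z_k$ and the relation $Az_k-b = \Delta p_k/c$. I do not expect any serious obstacle.
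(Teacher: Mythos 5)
Your proposal follows essentially the same route as the paper's proof: apply the $\varepsilon$-subgradient inequality for the prox-subproblem at $z'=z_{k-1}$, use the identity $\tfrac12\|z_k-z_{k-1}\|^2+\langle v_k,z_{k-1}-z_k\rangle=\tfrac12\|r_k\|^2-\tfrac12\|v_k\|^2$ together with $\|v_k\|^2+2\varepsilon_k\le\sigma_c^2\|r_k\|^2$, and then shift the multiplier via $Az_k-b=\Delta\lm_k/c$. One caveat: the ``strong convexity version of the $\varepsilon$-subgradient inequality'' you invoke in the first step is not valid for the $\varepsilon$-subdifferential as defined in \eqref{def:epsSubdiff}. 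For example, with $h(z)=\tfrac{\mu}{2}z^2$ on $\R$ one has $u\in\partial_\varepsilon h(z)$ iff $|u-\mu z|\le\sqrt{2\mu\varepsilon}$, and for any such $u\ne\mu z$ the strengthened inequality $h(z')\ge h(z)+u(z'-z)+\tfrac{\mu}{2}(z'-z)^2-\varepsilon$ fails for $|z'-z|$ large; only the plain inequality without the quadratic term is guaranteed. Fortunately this is harmless here: the extra term $\tfrac14\|z_{k-1}-z_k\|^2$ is nonnegative and you simply drop it, so the identical computation with the plain $\varepsilon$-subgradient inequality (which is exactly what the paper does) yields $\lambda\left[{\cal L}_\pen(z_{k-1},\lm_{k-1})-{\cal L}_\pen(z_k,\lm_{k-1})\right]\ge\tfrac{1-\sigma_c^2}{2}\|r_k\|^2$ and hence \eqref{auglagDecreasing2}. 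For the same reason, neither the strong convexity of $\Phi_k$ nor the second-order argument you give for it (which tacitly assumes $f$ is twice differentiable, not assumed in \textbf{(B5)}) is actually needed in this lemma.
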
	
	\begin{proof} 
	In view of the update rule for $p_k$ given in step~1 of S-IAIPAL and the definitions of  ${\cal L}_c$ and $\Delta p_k$ given in \eqref{lagrangian2} and \eqref{def:Delta}, respectively, we have
		\begin{equation}
		\label{dreasingproperty00}
		{\cal L}_\pen(z_k,\lm_k)-{\cal L}_\pen(z_k,\lm_{k-1})
		=\left\langle\Delta\lm_k,Az_k-b\right\rangle
		=\frac{1}{\pen} \|\Delta\lm_k\|^2,
		\end{equation}
		where the last identity follows from the fact that $\Delta p_k=0$ when $k \not\equiv 1 \bmod \lceil \alpha c \rceil$ and $A{z_k}-b= c^{-1} \Delta p_k$ when $k \equiv 1 \bmod \lceil \alpha c \rceil$.
		Now,  it follows from \eqref{def:epsSubdiff}, \eqref{GIPPinc_ineq}, and \eqref{def:rk} that
		\begin{align*}
		& \lambda{\cal L}_\pen(z_k,\lm_{k-1}) - \lambda{\cal L}_\pen(z_{k-1},\lm_{k-1})
		\leq -\frac{1}{2} \|z_{k}-z_{k-1}\|^2 +
		\inner{v_{k}}{z_k-z_{k-1}} +\varepsilon_k \\
		&= -\frac{1}{2} \|v_k+z_{k-1} - z_{k} \|^2 + \frac{\|v_k\|^2}2 + \varepsilon_k  \leq -\frac{1-\sigma_c^2}{2} \|r_k\|^2,
		\end{align*}
		which implies that

$$
\frac{1-\sigma_c^2}{2\lambda} \|r_k\|^2\leq		{\cal L}_\pen(z_{k-1},\lm_{k-1})-{\cal L}_\pen(z_k,\lm_{k-1}).
$$
The inequality in \eqref{auglagDecreasing2} then follows by combining  the  latter inequality with \eqref{dreasingproperty00}. 
	\end{proof}
	
	Recall that Proposition~\ref{prop:approxsol}(a) implies that the triple
	$(\hat z,\hat p, \hat w)=(\hat z_k,\hat p_k,\hat w_k)$
	satisfies the inclusion in \eqref{eq:approx_stationary1'}.
	The following technical result gives a preliminary
	bound on $\|\hat w_k\|$ and establishes the key
	inequality mentioned in the second paragraph
	of this section.


	\begin{lemma}\label{lem:wkhat-Deltak} Consider the sequences  $\{(z_k,\lm_k,v_k,\varepsilon_k)\}$ and $\{(\hat z_k,\hat p_k,\hat w_k)\}$ generated by  S-IAIPAL  and let  $C_1$, $\Delta_k$, and $\Delta p_k$ be as in  \eqref{definition of sigma}, \eqref{definition:Deltak}, and \eqref{def:Delta}, respectively.
Then,   the following statements hold: 
	\begin{itemize}
	\item[a)] for every $k\geq 1$, we have
		\begin{equation}\label{ineq:chi-DeltaL&DeltaPk}
		\|\hat w_k\|^2
		\leq \frac{C_1}{\lam}\left[{\cal L}_\pen(z_{k-1},\lm_{k-1})-{\cal L}_\pen(z_k,\lm_k) +\frac{1}{\pen} \|\Delta\lm_k\|^2\right];
		\end{equation}
	\item[b)]	for every $k\geq 2$, we have
		\begin{equation}
		\label{ineq:min-chik}
		\min_{2\leq i \le k} \|\hat w_i\|^2 \le  \frac{C_1}\lam \left[ \Delta_k + \frac{1}{k-1}\sum_{i=2}^k \frac{\|\Delta p_i\|^2}{c}  \right].
		\end{equation}
	\end{itemize}	
	\end{lemma}

\begin{proof}
a) 
	It follows from Proposition~\ref{prop:approxsol}(a) that
	the triple $(\hat z_k,\hat p_k,\hat w_k)$ computed in step 2 of S-IAIPAL
	satisfies, in particular, the first inequality in \eqref{ineq:zkhat-zk}.
	This conclusion together with
	inequality \eqref{auglagDecreasing2} then imply that 
		\begin{equation*}
		\|\hat w_k\|^2\le  \frac{\left(1+ 2\nu\right)^2\|r_k\|^2}{\lam^2}\leq \frac{2\left(1+ 2\nu\right)^2}{\lam (1-\sigma_c^2)}\left[{\cal L}_\pen(z_{k-1},\lm_{k-1})-{\cal L}_\pen(z_k,\lm_k) +\frac{1}{\pen} \|\Delta\lm_k\|^2\right],
		\end{equation*}
		and hence that \eqref{ineq:chi-DeltaL&DeltaPk} holds,
		in view of the definition of $C_1$   and the fact  $\sigma_c\leq \sigma$, see \eqref{definition of sigma}.

b) Summing inequality \eqref{ineq:chi-DeltaL&DeltaPk} from $k=2$ to $k=k$, and  using the definition of $\Delta_k$ given in \eqref{definition:Deltak}, we obtain
 		\begin{gather*}
		(k-1)\min_{2\leq i\leq k}\|\hat w_i\|^2 \leq \sum_{i=2}^k \|\hat w_i\|^2 
		\leq \frac{C_1}{\lambda}\left[(k-1) \Delta_k + \sum_{i=2}^k \frac{\|\Delta p_i\|^2}{c}\right].
		\end{gather*}
\end{proof}



We now outline how \eqref{ineq:min-chik}, together with some technical results below, can be used to establish the first bound in  \eqref{ineq:auxresidualbound}.
Bound \eqref{ineq:min-chik} on  $\min_{i\leq k}\|\hat w_i\|^2$ 
 is the sum of several
terms, one of which depends on $\Delta_k$.
Now, Lemmas~\ref{lem:constante0} and  \ref{lem:boundingLkbelow} show that
$\Delta_k$ is ${\cal O}([1+\|p_k\|^2] k^{-1})$.
Moreover, with the help of Lemmas~\ref{lem:aux-rk-deltak}--\ref{lem:auxtoboundpk},
Proposition~\ref{prop:mainprop-boundpk} establishes
that $\|p_k\|$ is  bounded by
a constant independent of $c$.
Note that \eqref{ineq:min-chik}, the above two observations, and the update rule \eqref{def:pk0} then imply that  $\min_{i\leq k}\|\hat w_i\|^2$
behaves as ${\cal O}(k^{-1}+\alpha^{-1}c^{-2})$ and, hence, that
the first relation in \eqref{ineq:auxresidualbound} holds.  

The next result, whose proof can be found in \cite[Lemma A.3]{RJWIPAAL2020},  will be used in the proof of Lemma~\ref{lem:constante0}.
\begin{lemma} \label{lem:auxNewNest2}
		Let proper function $\tilde \phi: \Re^n \to (-\infty,\infty]$,
		scalar $\tilde \sigma \in (0,1)$  and
		$(z_0,z_1) \in \Re^n \times     \dom \tilde \phi$ be given, and assume that there exists
		$(v_1,\varepsilon_1)$ such that
		\begin{gather}
		v_1 \in \partial_{\varepsilon_1} \left(\tilde\phi+\frac{1}{2}\|\cdot-z_0\|^2\right) (z_1), 
		\quad  \|v_1\|^2 + 2 \varepsilon_1 \leq \tilde\sigma^2 \|v+z_{0}-z_1\|^{2}. \label{Auxeq:prox_incl}  
		\end{gather}
		Then, for every $z\in \Re^n$ and $s>0$, we have
		\[
		\tilde \phi(z_1)+\frac{1}{2} \left[ 1 - \tilde\sigma^2 ( 1 + s^{-1}) \right]\|v_1+z_0 - z_1\|^{2}\le \tilde \phi(z) +\frac{s+1}{2} \|z-z_0\|^2.
		\]
	\end{lemma}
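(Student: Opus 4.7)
The plan is to unfold the defining inequality of the $\varepsilon_1$-subdifferential inclusion in \eqref{Auxeq:prox_incl}, rearrange it to isolate $\tilde\phi(z_1)$, and then absorb the resulting cross and quadratic terms through a single application of Young's inequality tuned to the free parameter $s$. The key algebraic trick will be to complete a square that converts $\|z_1-z_0\|^2$ into the quantity $\|v_1+z_0-z_1\|^2$ that actually appears in both the hypothesis and the conclusion.

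Concretely, I would first evaluate the inclusion in \eqref{Auxeq:prox_incl} at an arbitrary test point $z$ to obtain
\begin{equation*}
\tilde\phi(z_1) \le \tilde\phi(z) + \tfrac{1}{2}\|z-z_0\|^2 - \tfrac{1}{2}\|z_1-z_0\|^2 - \langle v_1,\,z-z_1\rangle + \varepsilon_1.
\end{equation*}
Writing $z-z_1=(z-z_0)-(z_1-z_0)$ splits the cross product into a $(z-z_0)$-part and a $(z_1-z_0)$-part. The latter combines with $-\tfrac{1}{2}\|z_1-z_0\|^2$ through the identity $-\tfrac{1}{2}\|z_1-z_0\|^2 + \langle v_1,z_1-z_0\rangle = \tfrac{1}{2}\|v_1\|^2 - \tfrac{1}{2}\|v_1+z_0-z_1\|^2$, which is exactly what is needed to materialize the quantity $\|v_1+z_0-z_1\|^2$ that appears in the conclusion.

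After this rewriting, the desired conclusion reduces to verifying that
\begin{equation*}
-\tfrac{s}{2}\|z-z_0\|^2 - \langle v_1,\,z-z_0\rangle + \tfrac{1}{2}\|v_1\|^2 + \varepsilon_1 - \tfrac{\tilde\sigma^2(1+s^{-1})}{2}\|v_1+z_0-z_1\|^2 \;\le\; 0.
\end{equation*}
I would bound the cross product via Young's inequality with weight $s$, namely $-\langle v_1,z-z_0\rangle \le \tfrac{s}{2}\|z-z_0\|^2 + \tfrac{1}{2s}\|v_1\|^2$. This cancels the $\|z-z_0\|^2$ contribution and leaves $\tfrac{1+s^{-1}}{2}\bigl[\|v_1\|^2 - \tilde\sigma^2\|v_1+z_0-z_1\|^2\bigr] + \varepsilon_1$. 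The quadratic estimate in \eqref{Auxeq:prox_incl}, rewritten as $\|v_1\|^2 - \tilde\sigma^2\|v_1+z_0-z_1\|^2 \le -2\varepsilon_1$, then collapses the expression to $-s^{-1}\varepsilon_1 \le 0$, closing the argument.

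The main, and essentially only, subtlety is coefficient matching: the factor $1+s^{-1}$ produced by Young's inequality must line up exactly with the $1+s^{-1}$ that multiplies $\tilde\sigma^2$ in the conclusion, and this is precisely what forces the Young weight to be $s$ (and hence the right-hand side coefficient in the lemma to be $(s+1)/2$ rather than some other function of $s$). Everything else is bookkeeping.
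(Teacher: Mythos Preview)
Your proof is correct. The paper does not actually prove this lemma; it only states the result and refers the reader to \cite[Lemma~A.2]{RJWIPAAL2020} for a proof. Your argument---unwinding the $\varepsilon_1$-subdifferential inequality, completing the square to produce $\|v_1+z_0-z_1\|^2$, and closing with Young's inequality with weight $s$ so that the resulting $(1+s^{-1})/2$ coefficient matches---is the natural self-contained route, and the computation you outline checks out line by line (in particular the final bound collapses exactly to $-s^{-1}\varepsilon_1\le 0$).
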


	The following technical result shows that
	${\cal L}_\pen(z_1,\lm_1)$  can be
	majorized by a scalar which
	does not depend on $c$. This fact,
	which
	is not immediately apparent from
	the definition of
		${\cal L}_\pen(\cdot,\cdot)$,
		plays an important role
		in showing that S-IAIPAL or
		IAIPAL can start from an arbitrary (and hence infeasible) point in ${\mathcal H}$.

	\begin{lemma}\label{lem:constante0}  The first quadruple $(z_1,\lm_1,v_1,\varepsilon_1)$  generated by  S-IAIPAL satisfies
		\begin{equation} \label{aux:ineqTz1p0}
		{\cal L}_\pen(z_1,\lm_1)\leq 3\left(\Delta\phi^*+2m_fD^2\right)+\phi_*,
		\end{equation}
		where  $\phi_*$ and $\Delta\phi^*$ are  as in \eqref{def:zeta}.
	\end{lemma}
\begin{proof}
	The fact that
		$(z_1,v_1,\varepsilon_1)$ satisfies \eqref{GIPPinc_ineq} with  $k=1$ and $\tilde \sigma=\sigma_c$,  Lemma~\ref{lem:auxNewNest2} with
		$s=1$ and
		$\tilde{\phi} =\lambda {\cal L}_\pen(\cdot,\lm_0)$, and condition {\bf (B3)},
		imply that	for every $ z\in {\mathcal H}$,
		\begin{align*}
		\lambda{\cal L}_\pen(z_1,\lm_0) + \frac{1-2\sigma_c^2}{2}\|r_1\|^2 &\leq   \lambda{\cal L}_\pen(z,\lm_0) +\|z-z_0\|^2 
		\leq   \lambda{\cal L}_\pen(z,\lm_0) + D^2,
		\end{align*} 
		where  $r_1$ is  as in \eqref{def:rk}.
		Using the definitions of $\phi^*$ and $\lambda$ given in
	\eqref{optl0} and \eqref{definition of sigma}, respectively, the fact that
	$1-2\sigma^2_c \ge 1-2\sigma^2 \ge 0$
	due to the definitions of $\sigma$ and $\sigma_c$ in step~0 of S-IAIPAL,
	and the fact that
	the definition of ${\cal L}_\pen$ in \eqref{lagrangian2}  implies that  $ {\cal L}_\pen(z,\lm_0)=(f+h)(z)$ for every  $z\in { \mathcal F}:=\{z\in {\mathcal H}:A z=b\}$, we then conclude from
	the above inequality, as $z$ varies in
	${\cal F}$, that
		\begin{equation*}
{\cal L}_\pen(z_1,\lm_0) \leq \phi^*+2m_fD^2.
		\end{equation*}
		The above inequality together with
		 the fact that  $p_0=0$, \eqref{def:pk} with $k=1$, and the definitions of ${\mathcal L_c}$ and $\phi_*$ given in \eqref{lagrangian2} and \eqref{def:zeta}, respectively, then imply that
		\begin{align*}
	    {\cal L}_\pen(z_1,\lm_1) &=  {\cal L}_\pen(z_1,\lm_0)+ \pen\|Az_1-b\|^2 =
	    3 {\cal L}_\pen(z_1,\lm_0)
	    - 2 (f+h)(z_1) \\
	    & \leq 3(\phi^*+2m_fD^2) -2 \phi_*,
	    \end{align*}
	  which  proves \eqref{aux:ineqTz1p0} in view of the definition of $\Delta \phi^*$.
	    \end{proof}

	The following technical result shows that 
	$\Delta_k = {\cal O}([1+c^{-1}\|p_k\|^2]k^{-1})$.
	
	\begin{lemma}\label{lem:boundingLkbelow} Let  $\{(z_k,\lm_k)\}$ be  generated by  S-IAIPAL  and consider $\{\Delta_k\}$ as in \eqref{definition:Deltak}. Then, the following statements hold:
\begin{itemize}
    \item[a)] for every $k\geq 1$, we have
    \begin{equation}\label{ineq:p1po}
	{\cal L}_\pen(z_k,\lm_k)+\frac{\|\lm_k\|^2}{2 c }\geq \phi_*,
    \end{equation}
where   $\phi_*$ is as in \eqref{def:zeta};
    \item[b)] for every $k\geq 2$, we have 
    \begin{equation}
        \label{ineq:bound-Deltak}
		\Delta_k\leq \frac{1}{k-1}\left[3\left(\Delta\phi^*+2m_fD^2\right) + \frac{\|p_k\|^2}{2c}\right],
		\end{equation}
where $\Delta\phi^*$ is as in  \eqref{def:zeta}.
	\end{itemize}		
	\end{lemma}
	\begin{proof} (a) Using the definitions of ${\cal L}_\pen$ and $\phi_*$ given in \eqref{lagrangian2} and \eqref{def:zeta}, respectively, we have
		\begin{align*}
		{\cal L}_\pen(z_k,\lm_k) &=  (f+h)(z_k)  +  \Inner{\lm_k}{Az_k-b}+\frac{c}2 \|Az_k-b\|^2\\
		&\geq \phi_*+\frac{1}{2}\left\|\frac{\lm_k}{\sqrt{c}} + \sqrt{c}(Az_k-b)\right\|^2- \frac{1}{2c} \|\lm_k\|^2,
		\end{align*}
		and hence that \eqref{ineq:p1po} holds.

(b) This statement follows  from
\eqref{aux:ineqTz1p0}, \eqref{ineq:p1po}, and the definition of $\Delta_k$ in \eqref{definition:Deltak}.
	\end{proof}

	The next technical results
	(i.e., Lemmas \ref{lem:aux-rk-deltak}--\ref{lem:auxtoboundpk})
	develop the necessary tools for showing
	in Proposition~\ref{prop:mainprop-boundpk}
	that the sequence $\{p_k\}$ is bounded.
	The first one gives some straightforward bounds among the
	different quantities involved in the analysis of S-IAIPAL.

	\begin{lemma}\label{lem:aux-rk-deltak} Let $\{(z_k,\lm_k,v_k,\varepsilon_k)\}$  be generated by S-IAIPAL and let $\{r_k\}$ be as in \eqref{def:rk}.
		Then, the following inequalities hold for every $k \ge 1$,
		\begin{equation}\label{ineq:vk-epsk}
		\quad \|r_k\|\leq \frac{D}{1-\sigma},\quad  \|v_k\|^2\leq C_2 D^2, \quad \varepsilon_k\leq \frac{C_2D^2}{2},
		\end{equation}
		where $D$ is as in {\bf (B3)} and $\sigma$ is as in step~0 of S-IAIPAL, and $C_2$ is as in \eqref{definition:C3}.
	\end{lemma}
	\begin{proof}
First note that, in view of step~1 of S-IAIPAL, the tuples  $(\lam,z_{k-1},p_{k-1})$ and $(z_k,v_k,\varepsilon_k)$
	satisfy \eqref{GIPPinc_ineq}. Hence, using the inequality in \eqref{GIPPinc_ineq}, the definition of $r_k$ given in \eqref{def:rk},  the triangle inequality, the first condition in {\bf (B3)}, and the fact that $\sigma_c\leq \sigma$, we have
		\begin{equation}\label{aux:ineq1234}
	\|r_k\|-D\leq 	\|r_k\| -\|z_k-z_{k-1}\| \leq \|v_k \| \le \sigma \|r_k\|, \qquad \varepsilon_k\leq \frac{\sigma^2\|r_k\|^2}{2}. 
		    \end{equation}
		The first inequality in \eqref{ineq:vk-epsk} immediately follows from the first setting of inequalities  in \eqref{aux:ineq1234}. The last two inequalities in \eqref{ineq:vk-epsk} follow from the first inequality in \eqref{ineq:vk-epsk},   the  last two inequalities in \eqref{aux:ineq1234} and the definition of $C_2$ in \eqref{definition:C3}.
	\end{proof}
	
	The  following basic result is used in Lemma~\ref{prop:bounding-sumpk}. Its proof can be found, for instance, in \cite[Lemma~A.4]{MaxJeffRen-admm}. Recall that $\sigma^+_A$ denotes  the smallest positive singular value of a nonzero linear operator $A$ . 

		\begin{lemma}\label{lem:linalg} 
Let $A:\Re^n \to \Re^l$ be a  nonzero linear operator. Then,
$\sigma^+_A\|u\|\leq \|A^*u\|$,   for every $u \in A(\Re^n)$.
\end{lemma}

The next result defines a slack $\xi_k \in \partial_{(\lambda^{-1}\varepsilon_k)} h(z_k)$ which realizes
	the inclusion in \eqref{inclusion:wk} and gives a preliminary
	bound on $\|p_k\|$ in terms of $\|\xi_k\|$.

	\begin{lemma}\label{prop:bounding-sumpk} 
	Consider the sequence $\{(z_k, q_k, v_k,\varepsilon_k)\}$ generated by S-IAIPAL and the sequence $\{w_k\}$
	as in \eqref{def:wk-newdeltak}, and define 
	\begin{equation}
	\label{def:xitildek}
	\xi_k:= w_k - \nabla f( z_k) - A^*q_k
\end{equation}
for every $k \ge 1$.
		Then, the following statements hold:
		\begin{itemize}
		    \item[a)]
		    for every $k \ge 1$, we have
		    \begin{equation}\label{eq:inc-vtilde-boundBtilde}
		 \xi_k\in \partial_{(\lambda^{-1}\varepsilon_k)} h (z_k), \qquad  \|w_k\| \leq \frac{C_3D}{\lambda}
		\end{equation}
		where  $D$ is as in {\bf (B3)} and
		$C_3$ is as in \eqref{definition:C3}; 
		\item[b)] for every $k\geq 1$, we have 
		\begin{equation}
		\label{ineq:pk-xik}
		\sigma^+_A\|q_k\|\leq \|\xi_k\|+ \nabla_f +\frac{C_3D}{\lambda},
		\end{equation}
		where  $\nabla_f$ is
		as in {\bf (B3)}.
		\end{itemize}
		\end{lemma}
		\begin{proof}
	(a) 
	The inclusion in \eqref{eq:inc-vtilde-boundBtilde}
	follows from
	\eqref{inclusion:wk} and the definition of $\xi_k$ in \eqref{def:xitildek}.
	The inequality in \eqref{eq:inc-vtilde-boundBtilde}
	follows from  the first inequalities  in
	\eqref{ineq:wk-zk-zkhat} and  \eqref{ineq:vk-epsk}, and the definitions of $\sigma_c$ and $C_3$ in \eqref{definition of sigma} and \eqref{definition:C3}, respectively.
		
	(b) Using {\bf (B4)}, the fact that $p_0=0$  together with the update formula for $q_k$ and $p_k$, it is easy to see that  $\{q_k\}\subset A(\Re^n)$.
	Using Lemma~\ref{lem:linalg},	relation \eqref{def:xitildek}, the triangle inequality, {\bf (B3)},   and the inequality in \eqref{eq:inc-vtilde-boundBtilde}, we conclude that
	    	\begin{align}
		\label{eq:qk_tech_bd}
		\sigma^+_A\|q_k\|  & \leq \|A^*q_k \| \leq \|\xi_k\|+\|\nabla f(z_k)\|+\|w_k\| \leq \|\xi_k\|+ \nabla_f +\frac{C_3D}{\lambda},
		\end{align}
			and, hence, that \eqref{ineq:pk-xik} holds.
\end{proof}

The next technical result essentially allows us to obtain a preliminary bound on
$\|\xi_k\|$ under assumption {\bf(B4)}. It is worth mentioning its proof
is based on a key inequality that appears in
the proof of Lemma~3 of 
\cite{PPmetNonconvex2019}.

	\begin{lemma}\label{lem:bound_xiN}
		Let $h$ be a function as in {\bf (B2)}.
		Then, for every $ z,z'\in {\mathcal H}$, $\varepsilon>0$,  and $\xi \in \partial_{\varepsilon} h(z)$, we have
		    \[
		\|\xi\|{\rm dist}_{\partial {\mathcal H}}(z') \le \left({\rm dist}_{\partial {\mathcal H}}(z')+\|z-z'\|\right)L_h + \inner{\xi}{z-z'}+\varepsilon,
		    \]
	where $\partial {\cal H}$ denotes the boundary of ${\cal H}$.
		  \end{lemma}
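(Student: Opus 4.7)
The plan is to fix $z, z' \in \mathcal{H}$, $\varepsilon > 0$ and $\xi \in \partial_\varepsilon h(z)$, and then exploit the $\varepsilon$-subgradient inequality \eqref{def:epsSubdiff} at a carefully chosen test point $u \in \mathcal{H}$ that lies in the direction of $\xi$ from $z'$. Letting $d := \mathrm{dist}_{\partial\mathcal{H}}(z')$, I would use the fact that $\mathcal{H}$ is closed convex to see that the closed ball $\bar B(z',d)$ is contained in $\mathcal{H}$, and then choose
\[
u := z' + d\,\frac{\xi}{\|\xi\|}
\]
when $\|\xi\| \neq 0$, so that $u \in \mathcal{H}$ and $\langle \xi, u - z'\rangle = d\|\xi\|$.

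Plugging this $u$ into the defining inequality $h(u) \ge h(z) + \langle \xi, u-z\rangle - \varepsilon$ from \eqref{def:epsSubdiff} and writing $u - z = (u-z') + (z'-z)$ gives
\[
d\|\xi\| + \langle \xi, z' - z\rangle \le h(u) - h(z) + \varepsilon.
\]
Bounding $h(u) - h(z)$ by the $L_h$-Lipschitz property of $h$ on $\mathcal{H}$ (assumption \textbf{(B2)}) together with the triangle inequality yields
\[
h(u) - h(z) \le L_h\|u - z\| \le L_h\bigl(\|u-z'\| + \|z'-z\|\bigr) = L_h\bigl(d + \|z-z'\|\bigr),
\]
and substituting this into the previous display and rearranging produces exactly the claimed bound.

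Two degenerate cases need a quick comment. If $\|\xi\| = 0$, the left-hand side of the target inequality is zero and the right-hand side is the sum of nonnegative terms plus the $\varepsilon$-subgradient slack, which is $\ge 0$ by taking $u=z'$ in \eqref{def:epsSubdiff} and using the Lipschitz property. If $d = 0$ (i.e. $z' \in \partial\mathcal{H}$), the choice of $u$ above is not needed: the inequality reduces to $0 \le L_h\|z-z'\| + \langle\xi, z-z'\rangle + \varepsilon$, which again follows from \eqref{def:epsSubdiff} at $u = z'$ combined with $|h(z)-h(z')|\le L_h\|z-z'\|$. I do not expect any genuine obstacle here; the only delicate point is justifying $u \in \mathcal{H}$, which rests on the elementary fact that for closed convex $\mathcal{H}$ with $z' \in \mathcal{H}$ one has $\bar B(z', \mathrm{dist}_{\partial\mathcal{H}}(z')) \subset \mathcal{H}$.
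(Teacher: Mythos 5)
Your proof is correct, and it takes a genuinely different route from the paper's. The paper first invokes its Lemma~\ref{Prop:eps-subd-bounded-cone} (the equivalence of {\bf (B2)} with the inclusion $\partial_\varepsilon h(z)\subset \bar B(0,L_h)+N^{\varepsilon}_{\mathcal H}(z)$, proved via support functions and conjugacy) to split $\xi=\xi_1+\xi_2$ with $\|\xi_1\|\le L_h$ and $\xi_2\in N^{\varepsilon}_{\mathcal H}(z)$, then bounds $\|\xi_2\|\,{\rm dist}_{\partial{\mathcal H}}(z')$ by comparing ${\mathcal H}$ with the half-space $\{u:\inner{\xi_2}{u-z}\le\varepsilon\}$. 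You instead work directly from the definition \eqref{def:epsSubdiff}: you test the $\varepsilon$-subgradient inequality at $u=z'+d\,\xi/\|\xi\|$ with $d={\rm dist}_{\partial{\mathcal H}}(z')$, which lies in ${\mathcal H}$ because $\bar B(z',d)\subset{\mathcal H}$ (a fact that needs only closedness of ${\mathcal H}$ and a connectedness argument on the open ball, and note that closedness of ${\mathcal H}=\dom h$ is itself a consequence of {\bf (B2)}, cf.\ part (i) of Lemma~\ref{Prop:eps-subd-bounded-cone}), and then you control $h(u)-h(z)$ by the Lipschitz bound. Your two degenerate cases ($\xi=0$ and $d=0$) are handled correctly. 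What each approach buys: the paper's decomposition makes explicit the structural fact $\partial_\varepsilon h(z)\subset \bar B(0,L_h)+N^{\varepsilon}_{\mathcal H}(z)$, which it advertises separately as the mechanism for bounding the multipliers; your argument is shorter, more elementary, and self-contained for this particular inequality, bypassing the support-function machinery of the appendix entirely. The only (shared, immaterial) caveat is the implicit assumption $\partial{\mathcal H}\ne\emptyset$, which holds here since ${\mathcal H}$ is bounded by {\bf (B3)}.
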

	\begin{proof}
 Let $\varepsilon>0$, $z,z' \in {\cal H}$ and $\xi \in \partial_{\varepsilon} h(z)$ be given. It follows from the Lipchitz continuity of $h$ in ${\bf (B2)}$ combined with  the equivalence between (a) and (d) of Lemma~\ref{Prop:eps-subd-bounded-cone}  that  
	 there exist $\xi_1\in \bar{B}(0,L_h)$ and $\xi_2\in  N^\varepsilon_{\mathcal H}(z)$ such that $\xi=\xi_1+\xi_2$.  Clearly,
	 it follows from the definitions of
	 $\bar{B}(0,L_h)$ and
	 $N_{\mathcal H}^{\varepsilon}(z)$
	 in Subsection \ref{sec:bas} that
		$$
\|\xi_1\| \le L_h, \quad {\mathcal H} \subset H_-:=\{u \in \Re^n 
:\inner{\xi_2}{u-z}-\varepsilon \leq 0\}.
		$$
		Using the last inclusion and
		the fact that $z'\in  {\mathcal H}$, 
		we easily see that
		$$
		{\rm dist}_{\partial {\mathcal H}}(z') \| \xi_2\| \leq {\rm dist}_{\partial H_-}(z')\|\xi_2\|=\inner{\xi_2}{z- z'}+\varepsilon.
		$$
The last inequality, the fact that $\xi=\xi_1+\xi_2$, the triangle inequality, and the Cauchy-Schwarz inequality, then imply that
		\begin{align*}
		{\rm dist}_{\partial {\mathcal H}}(z')\|\xi \|&\leq {\rm dist}_{\partial {\cal H}}(z')\|\xi_1\|+{\rm dist}_{\partial {\cal H}}(z')\|\xi_2\| \leq {\rm dist}_{\partial {\cal H}}( z')\|\xi_1\|+ \inner{\xi_2}{z- z'}+\varepsilon\\
		&= {\rm dist}_{\partial {\cal H}}(z')\|\xi_1\|-\inner{\xi_1}{z- z'}+ \inner{\xi}{z-z'}+\varepsilon\\
		&\leq  \left({\rm dist}_{\partial {\cal H}}(z')+\|z-z'\|\right)\|\xi_1\|+ \inner{\xi}{z-z'}+\varepsilon,
		\end{align*}
		which combined with the fact that $\|\xi_1\|\leq L_h$ shows that the conclusion of  the lemma holds. 
	\end{proof}

	The next lemma presents some important technical inequalities using the bounds in Lemma~\ref{lem:bound_xiN} and Lemma~\ref{prop:bounding-sumpk}(b).

\begin{lemma}\label{lem:auxtoboundpk}
 The iterates $\{(p_k, q_k, z_k)\}$ generated by S-IAIPAL satisfy:
  \begin{itemize}
     \item[a)] $\bar{d} \sigma_{A}^+ \|q_k\| \leq D\kappa_0 - \left\langle q_{k},Az_{k}-b\right\rangle$ for every $k\geq 1$;
     \item[b)] $c\|Az_k - b\| \leq \theta_D \kappa_0 (\sigma_A^+)^{-1} + \|p_{k-1}\|$ for every $k\geq 1$;
     \item[c)] $c^{-1}\|p_k\|^2+\bar{d}\sigma_A^+\|p_k\| \leq c^{-1}\inner{p_k}{p_{k-1}}+D\kappa_0$ for every $k \equiv 1 \bmod k_c$;
 \end{itemize}
		where $\sigma_A^+$ is defined in
		Subsection~\ref{sec:bas} and $\bar d$, $\theta_D$, and $\kappa_0$  are as in  \eqref{def:zeta}, \eqref{eq:condNumb}, and \eqref{def:kappa00}, respectively. 
		\end{lemma}
	\begin{proof}
	(a) Let $\{\xi_k\}$  be as in \eqref{def:xitildek}. 
	 Using \eqref{ineq:vk-epsk}, \eqref{eq:inc-vtilde-boundBtilde}, \textbf{(B3)}, the Cauchy-Schwarz and triangle inequalities, and the fact that $\lam=1/(2m_f)$ and $\|z_k - \bar{z}\|\leq D$, we first have that
\begin{align*}
\left\langle \xi_{k},z_{k}-\bar{z}\right\rangle +2m_{f}\varepsilon_{k} & \overset{\eqref{def:xitildek}}{=} \left\langle w_k -\nabla f(z_k) - A^*q_k,z_{k}-\bar{z}\right\rangle +2m_{f}\varepsilon_{k}\\ 
& \leq-\left\langle A^{*}q_{k},z_{k}-\bar{z}\right\rangle +\|z_{k}-\bar{z}\|\left(\|w_{k}\|+\|\nabla f(z_{k})\|\right)+2m_{f}\varepsilon_{k} \\
 & \leq-\left\langle q_{k},Az_{k}-b\right\rangle +D\left(\nabla_{f}+\left[2C_{3}+C_{2}\right]m_{f}D\right).
\end{align*}
Now, recall that $\bar d = {\rm dist}_{\partial \mathcal H}(\bar z)$  and note that $\xi_k \in \partial_{(\lambda^{-1}\varepsilon_k)} h(z_k)$ for every $k\geq 1$, in view of
\eqref{def:zeta} and Lemma~\ref{prop:bounding-sumpk}(a), respectively. 
Hence, using the above technical bound, Lemma~\ref{prop:bounding-sumpk}(b), Lemma~\ref{lem:bound_xiN} with $(\xi, z, z', \varepsilon) = (\xi_k, z_k, \bar z, \lambda^{-1}\varepsilon_k)$, 
the fact that $\lam=1/(2m_f)$, $\bar{d}\leq D$, and $\|z_k - \bar{z}\|\leq D$, and the definition of $\kappa_0$, we conclude that
\begin{align*}
\bar{d}\sigma_{A}^{+}\|q_{k}\| & \leq\bar{d}\left(\|\xi_{k}\|+\nabla_{f}+2m_{f}C_{3}D\right)\\
 & \leq(\bar{d}+\|z_{k}-\bar{z}\|)L_{h}+\left\langle \xi_{k},z_{k}-\bar{z}\right\rangle +2m_{f}\varepsilon_{k}+\bar{d}\left(\nabla_{f}+2m_{f}C_{3}D\right)\\
 & \leq D\left(2\left[L_{h}+\nabla_{f}\right]+\left[4C_{3}+C_{2}\right]m_{f}D\right)-\left\langle q_k,Az_{k}-b\right\rangle\\
 & =D\kappa_{0}-\left\langle q_{k},Az_{k}-b\right\rangle.
\end{align*}

b) Using part a), the definition of $q_k$, and the Cauchy-Schwarz and triangle inequalities, we have that
\begin{align*}
c\bar{d}\sigma_{A}^{+}\|Az_{k}-b\| & =\bar{d}\sigma_{A}^{+}\|q_{k}-p_{k-1}\|\leq\bar{d}\sigma_{A}^{+}\|q_{k}\|+\bar{d}\sigma_{A}^{+}\|p_{k-1}\|\\
 & \leq D\kappa_{0}-\left\langle q_k ,Az_{k}-b\right\rangle+\bar{d}\sigma_{A}^{+}\|p_{k-1}\| 
 \\ & \overset{\eqref{def:pk}}{=} D\kappa_{0}-\left\langle p_{k-1} ,Az_{k}-b\right\rangle -c\|Az_{k}-b\|^{2} +\bar{d}\sigma_{A}^{+}\|p_{k-1}\| \\
 & \leq D\kappa_{0}-c\|Az_{k}-b\|^{2}+\|p_{k-1}\|\left(\|Az_{k}-b\|+\bar{d}\sigma_{A}^{+}\right).
\end{align*}
Moving the $-c\|Az_{k}-b\|^{2}$ term to the left-hand-side, dividing the resulting inequality
by $\|Az_{k}-b\|+\bar{d}\sigma_{A}^{+}$, and using the definition of $\theta_D$ in \eqref{eq:condNumb} we conclude that
\[
c\|Az_{k}-b\|\leq\frac{D\kappa_{0}}{\|Az_{k}-b\|+\bar{d}\sigma_{A}^{+}}+\|p_{k-1}\|\leq\frac{\theta_{D}\kappa_{0}}{\sigma_{A}^{+}}+\|p_{k-1}\|.
\]

c) Let $k\equiv 1 \bmod k_c$. Using part a) and the fact that $q_k = p_{k} = p_{k-1} + c(Az_k - b)$, we have that
\begin{align*}
\bar{d}\sigma_{A}^{+}\|p_{k}\| & =\bar{d}\sigma_{A}^{+}\|q_{k}\|\leq D\kappa_{0}-\left\langle q_{k},Az_{k}-b\right\rangle =D\kappa_{0}+\frac{1}{c}\left\langle p_{k},p_{k-1}\right\rangle -\frac{1}{c}\|p_{k}\|^{2}
\end{align*}
which implies the desired bound.
\end{proof}

We observe that
	Lemma~\ref{lem:auxtoboundpk}(c)
	always holds under the weaker assumption
	that $\bar z \in {\cal H}$ and $A\bar z=b$
	but
	the scalar $\bar d$ which appears
	on it becomes zero
	when $\bar z \in \partial {\cal H}$.
	The following technical result establishes the boundedness of the sequence of Lagrange multipliers $\{p_k\}$ if instead {\bf (B4)}
	is assumed, and hence $\bar d>0$.

	\begin{proposition}\label{prop:mainprop-boundpk} The sequence $\{p_k\}$ generated by S-IAIPAL  satisfies
	\begin{equation} \label{ineq:pkbounded}
	\|p_k\|\leq \frac{\theta_D\kappa_0}{\sigma^+_A}
	 \end{equation}
	 for every $k\geq 0$, where $\kappa_0$ and $\theta_D$  are  as in \eqref{def:kappa00} and \eqref{eq:condNumb}, respectively.
	\end{proposition}
\begin{proof}
The proof is done  by induction on $k$.  Since $p_0=0$ and $\kappa_0\geq 0$, \eqref{ineq:pkbounded} trivially holds for $k=0$. Assume now that \eqref{ineq:pkbounded} holds with
$k=k-1$ for some $k\geq 1$. If $k\not\equiv 1 \bmod \lceil \alpha c\rceil$, then \eqref{def:pk} implies $p_k=p_{k-1}$, and \eqref{ineq:pkbounded} holds by our induction hypothesis. If $k\equiv 1 \bmod \lceil \alpha c\rceil$,
then the induction hypothesis together with Lemma~\ref{lem:auxtoboundpk}(c), the definition of $\theta_D$ in \eqref{eq:condNumb},  and the Cauchy-Schwarz inequality, imply that
		\begin{align*}
		\left(\frac{\|p_{k}\|}{c}+\sigma_A^+\bar{d}\right)\|p_{k}\| &\leq \frac{\|p_{k}\|\|p_{k-1}\|}{ c}+D\kappa_0\leq \frac{\|p_{k}\|D\kappa_0}{ c\sigma^+_A\bar d}+D\kappa_0 \\
		&=\left(\frac{\|p_{k}\|}{c}+\sigma^+_A\bar{d}\right)\frac{\theta_D\kappa_0}{\sigma^+_A}, 
		\end{align*}
		and hence that $\|p_{k}\|\leq \theta_D\kappa_0/\sigma^+_A$.
		We have thus proved that
	 \eqref{ineq:pkbounded} holds for all $k \ge 0$. 
	\end{proof}

The following result establishes that $\|\hat w_k\|=\mathcal{O}(\sqrt{\Delta_k} + \alpha^{-1/2} c^{-1})$ and $\|A\hat z_k-b\|=\mathcal{O}(c^{-1})$. Since $\Delta_k=\mathcal{O}(k^{-1})$ in view of \eqref{ineq:bound-Deltak} and \eqref{ineq:pkbounded}, it follows that $\|\hat{w}_k\|$ can be made arbitrarily small as the penalty parameter $c$ increases.   
	
\begin{lemma}
\label{lem:main_resid_bds}
The sequence 
		$\{(\hat z_k,\hat w_k, z_k)\}$ generated by  S-IAIPAL satisfies the following bounds:
	\begin{itemize}
	    \item[a)] $\|A\hat z_k-b \| \leq  {\kappa_2^2m_f}/{(c\|A\|^2)}$ for every $k\geq 1$;
	    \item[b)] $
	    \min_{2\leq i \le k} \|\hat w_i\|^2 \le 2 m_f C_1 \Delta_k+ 2{m_f\kappa_1^2}/{(c \lceil c \alpha \rceil \|A\|^2)}
	    $ for every $k\geq 1$.
	\end{itemize}
    where  $C_1$ is as in  \eqref{definition of sigma}, $\Delta_k$ is  as in \eqref{definition:Deltak}, $\kappa_0$ is as in \eqref{def:kappa00}, and  $\kappa_1$ and $\kappa_2$ are as in \eqref{def:kappa1-kappa2}.
		\end{lemma}
	\begin{proof}

a) It follows from Lemma~\ref{lem:auxtoboundpk}(b),  the second inequality in \eqref{ineq:zkhat-zk}, the  triangle inequality, and  the definitions  of $(\sigma_c,L_c)$ and $p_k$ given in  \eqref{definition of sigma} and \eqref{def:pk}, respectively, that
	\begin{align*}	
	\|A\hat z_k-b \|
	&  \leq  \|A z_k-b\| +  \|A\|\|\hat z_k-z_k\|
	\leq \frac{\theta_D\kappa_0}{\sigma^+_A c} + \frac{\|p_{k-1}\|}{c} + \frac{\sigma_c\|A\|\|r_k\|}{\sqrt{\lambda L_c+1}} \\
	&  \leq \frac{2\theta_D\kappa_0}{\sigma^+_A c} +\frac{\nu \|A\|\|r_k\|}{\lambda L_c+1} \le
	\frac{2\theta_D\kappa_0}{\sigma^+_A c} +\frac{\nu \|r_k\|}{\lambda c\|A\| },
	\end{align*}
where the last inequality is due to  $L_c\geq c\|A\|^2$. It follows from the above inequalities, \eqref{ineq:pkbounded}, and the first inequality in \eqref{ineq:vk-epsk}  that 
\begin{equation}\label{ineq:aux-control-infeasibility1}
	\frac{c\|A\|^2}{m_f} \|A\hat z_k-b \|\leq \frac{1}{m_f}\left(\frac{2\|A\|^2\theta_D\kappa_0}{\sigma^+_A}+\frac{\nu \|A\|D}{\lambda(1-\sigma)}\right)=2\theta_A\theta_D\left(\frac{\|A\|\kappa_0}{m_f}+ \frac{\nu \sigma_A^+\bar d}{1-\sigma}\right)
\end{equation}	
where the last relation is due to  the definitions of $\lambda$,  $\theta_A$,  and   $\theta_D$ given in \eqref{definition of sigma} and  \eqref{eq:condNumb}.
On the other hand, using the definitions of  $C_3$ and $\kappa_0$ given in  \eqref{definition:C3} and \eqref{def:kappa00}, respectively, and  the fact that   $\bar{d}\leq D$ and $\sigma_A^+\leq \|A\|$, we have  
\[
\frac{ \nu\sigma_A^+\bar d}{1-\sigma}\leq   C_3\|A\|D\leq \frac{\|A\| \kappa_0}{4m_f}.
 \]
Hence,  the desired bound  immediately follows from \eqref{ineq:aux-control-infeasibility1}, the latter inequalities and  the definition of $\kappa_2$  in \eqref{def:kappa1-kappa2}.

b) Define $I(k) := \{i : p_i \neq p_{i-1}, 2 \leq i \leq k\}$. In view of the multiplier update rule of S-IAIPAL, it is straightforward to show that $|I(k)| \leq \lfloor(k-1)/ \lceil \alpha c \rceil \rfloor \leq 2(k-1)/ \lceil \alpha c \rceil$.
Using   \eqref{eq:condNumb}, \eqref{ineq:min-chik},  \eqref{ineq:bound-Deltak}, \eqref{ineq:pkbounded}, the relation $(a+b)^2\leq 2a^2 + 2b^2$ for $a,b\in\r$, and the previous bound on $|I(k)|$, we have
\begin{align*}
& \frac{\lambda}{C_{1}}\min_{2\leq i\leq k}\|\hat{w}_{i}\|^{2} \leq 
\Delta_{k}+\sum_{i=2}^k\frac{\|\Delta p_{i}\|^{2}}{c(k-1)}
= \Delta_{k}+\sum_{i\in I(k)}\frac{\|\Delta p_{i}\|^{2}}{c(k-1)}\\
 & \leq\Delta_{k}+\frac{2\sum_{i\in I(k)}(\|p_{i}\|^{2}+\|p_{i-1}\|^{2})}{c(k-1)} \leq \Delta_{k}+ \left[\frac{\theta_{D}\kappa_{0}}{\sigma_{A}^{+}}\right]^{2}\left[\frac{4|I(k)|}{c(k-1)}\right] \\
 & \leq \Delta_{k}+ \frac{8}{c \lceil c \alpha \rceil} \left[\frac{\theta_{D}\kappa_{0}}{\sigma_{A}^{+}}\right]^{2}
 =\Delta_{k} + \frac{8\theta_{A}^2\theta_{D}^2\kappa_{0}^2}{c \lceil c \alpha \rceil \|A\|^2} = \Delta_{k} + \frac{\kappa_{1}^{2}}{c \lceil c \alpha \rceil \|A\|^{2}C_{1}},
\end{align*}
where the last relation is due to the definition of $\kappa_1$ given in  \eqref{def:kappa1-kappa2}. The desired bound then follows in view of the fact that $\lambda=1/(2m_f)$.
\end{proof}

Notice that, unless $c$ is sufficiently large, the bounds derived in the above lemma does not guarantee that either the feasibility residual $\|A\hat{z}_k-b\|$ or the stationarity residual $\|\hat{w}_k\|$ become sufficiently small, regardless of how large $k$ is. This is in contrast to all of the penalty/AL methods in \cite{WJRproxmet1, WJRComputQPAIPP, MinMax-RenWilliam, ImprovedShrinkingALM20, PPmetNonconvex2019, RJWIPAAL2020, inexactAugLag19} where the stationarity residual is always sufficiently small whenever the penalty parameter is updated.


	

	





	
We are now ready to present the proof of 	Theorem~\ref{theor:StaticIPAAL}.

\begin{proof}[Proof of Theorem~\ref{theor:StaticIPAAL}]
a) This statement follows immediately from \eqref{incl-ineq:wkhat}.

b) Let $T_0=T_0(m_f,\hat \rho)$ where $T_0(\cdot,\cdot)$ is as in \eqref{eq:outernumberiter} and assume that
S-IAIPAL has reached the $T_0$-th iteration and has not
stopped in its step 2.
Using \eqref{ineq:bound-Deltak} with $k=T_0$, 
and the definitions of $\lambda$, $\kappa_1$, and $T_0$ given in \eqref{definition of sigma},  \eqref{def:kappa1-kappa2},  and \eqref{eq:outernumberiter},  respectively, we then conclude that
		\begin{align*}
		\Delta_{T_0 } & \leq \frac{1}{{T_0}-1}\left[{3\left(\Delta\phi^*+2m_fD^2\right)} +  \frac{\|p_{T_0}\|^2}{2c}\right] \\
		& \leq \frac{1}{{T_0}-1} \left[{3\left(\Delta\phi^*+2m_fD^2\right)} +  \frac{(\theta_A\theta_D \kappa_0)^2}{2m_f} \right] \\
		& = \frac{1}{{T_0}-1} \left[{3\left(\Delta\phi^*+2m_fD^2\right)} +  \frac{\kappa_1^2}{16 C_1 m_f} \right] \leq \frac{\hat \rho^2}{4C_1m_f} =
\frac{\lambda\hat\rho^2}{2C_1}.
		\end{align*}
		
 Hence, S-IAIPAL must stop in step 3 of the $T_0$-th iteration.
	
	c) This statement follows immediately from Lemma~\ref{lem:main_resid_bds}(b) and  condition  \eqref{def:cbar-eta-rho} on the penalty parameter~$c$.
		
	d) First note that it follows from  part b) that S-IAIPAL stops either in step~2 or step~3 after a finite number of iteration. Now, in view of the stopping criterion in step~2 and part  c), it follows that
	S-IAIPAL stops with success at the $k$-th  iteration if and only if 
	$\hat w_k$ satisfies $\|\hat w_k\| \le \hat \rho$, in
	which case the triple
	$(\hat z_k,\hat p_k,\hat w_k)$ is a $(\hat\rho,\hat\eta)$-approximate stationary solution of \eqref{optl0} due to part a) and Definition~\ref{def:stationarypoint}. 
	Now, assume
	for contradiction that S-IAIPAL
	stops in step 3 (instead of
	step 2) at some
	iteration $k$, and hence that
	\[
	\min_{2\leq i \le  k} \|\hat w_i\| > \hat \rho, \qquad
	\Delta_k \le \frac{\lam \hat \rho^2}{2C_1} =
	\frac{ \hat \rho^2}{4m_fC_1}
	\]
	in view of the last observation above,
	\eqref{definition:Deltak}, and  the definition of $\lambda$ in \eqref{definition of sigma}.
These two inequalities together with
  \eqref{def:cbar-eta-rho}, and Lemma~\ref{lem:main_resid_bds}(c),
 yield the contradiction
\[
\hat\rho^2<\min_{2 \leq i \le k} \|\hat w_i\|^2 \le 2C_1m_f\Delta_k+ \frac{2m_f\kappa_1^2}{c \lceil c \alpha \rceil \|A\|^2}\leq \frac{\hat\rho^2}{2}+\frac{\hat\rho^2}{2}=\hat\rho^2.
\]
which must mean that S-IAIPAL stops with a $(\hat\rho,\hat\eta)$-approximate stationary solution in step~2.
\end{proof}

\section{Numerical experiments}

\label{sec:numerical}

This section presents experiments\footnote{See \href{https://github.com/wwkong/nc_opt/tree/master/tests/papers/IAIPAL}{https://github.com/wwkong/nc\_opt/tree/master/tests/papers/IAIPAL} for the full code.} which benchmark different variants of IAIPAL.
The first subsection benchmarks IAIPAL against three other state-of-the-art
constrained composite optimization solvers, while the second subsection compares
them against the ${\cal O}(\varepsilon^{-2})$ complexity
method in \cite{ErrorBoundJzhang-ZQLuo2020, ADMMJzhang-ZQLuo2020}. 

We start by describing the details of the IAIPAL variants IPL, IPL(A1), and IPL(A2).  All of them use the parameters
\[
c_{1}=\max\left\{ 1,\frac{L_{f}}{\|{\cal{A}}\|^{2}}\right\} ,\quad\sigma=\frac{1}{\sqrt{2}},\quad\nu=\sqrt{\sigma\left(\lam L_{f}+1\right)},\quad\tau=2.
\]
IPL is as described in Subsection~\ref{Sec:dynamic-IPAAL} with $\alpha = 1/\|A\|^2$, while IPL(A1) and IPL(A2) are a modification of IPL where the ACG subroutine is replaced with an adaptive ACG variant whose specific description can be found in
\cite[Section 5.2]{KongThesis2021}. The difference between the latter ACG variant
compared to the first one is that
the latter one adapts its proximal gradient step to the local curvature of its objective function (see the discussion in the second paragraph following ACG in Appendix~\ref{sec-nesterov}). IPL(A1) chooses $\alpha=1/\|A\|^2$ while IPL(A2) chooses $\alpha=1/c$, i.e., a multiplier update is performed at every outer iteration.

We now describe the other methods used in the first subsection, namely, two variants of the QP-AIPP method of \cite{WJRproxmet1} (nicknamed QP and QP(A)), a variant of the R-QP-AIPP method of \cite{WJRComputQPAIPP} (nicknamed RQP), and the iALM of \cite{ImprovedShrinkingALM20}. 
QP is the method in \cite[Algorithm 4.1.1]{KongThesis2021} while QP(A) is a modification of QP that replaces its ACG subroutine with the same adaptive ACG variant used by IPL(A). RQP is the variant in \cite[Algorithm 5.4.1]{KongThesis2021} which adds another level of adaptability to QP(A) in the sense that its prox parameter $\lam$ is also adapted to the local curvature of the objective function (see the discussion in \cite[Section 1]{WJRComputQPAIPP}). 
Our implementation of iALM uses the parameters
\[
\sigma=2,\quad\beta_{0}=\max\left\{ 1,\frac{L_{f}}{\|{\cal{A}}\|^{2}}\right\} ,\quad w_{0}=1,\quad\boldsymbol{y}^{0}=0,\quad\gamma_{k}=\frac{\left(\log2\right)\|c(x^{1})\|}{(k+1)\left[\log(k+2)\right]^{2}},
\]
for every $k\geq1$. Moreover, the starting point given to the $k$-th
APG call (in the iALM) is set to be $\boldsymbol{x}^{k-1}$, which
is the prox center for the $k$-th prox subproblem. 

We next describe the other methods used in the second subsection, namely, the three variants of the S-prox-ALM of \cite{ErrorBoundJzhang-ZQLuo2020, ADMMJzhang-ZQLuo2020} (nicknamed SPA1--SPA3). The parameter quadruple
$(\alpha,p,c,\beta)$ and initial points $(y_0, z_0)$ used by all the three variants are
\[
\alpha=\frac{\Gamma}{4},\quad p=2(L_{f}+\Gamma\|A\|^{2}),\quad c=\frac{1}{2(L_{f}+\Gamma\|A\|^{2})},\quad\beta=0.5,\quad y_{0}=0,\quad z_{0}=x_{0},
\]
where $\Gamma=0.1$, $1$, and $10$ for SPA1, SPA2, and SPA3, respectively. Note that the choice of $(\alpha,p,c,\beta)$ above with $\Gamma=10$ is the one that is used in the limited quadratic programming experiments of \cite[Section 6.2]{ADMMJzhang-ZQLuo2020}.
Moreover, the aforementioned reference establishes
the iteration-complexity of S-Prox-ALM
for a range of sufficiently small parameters $\beta$  that does not necessarily include the assigned value above, i.e,
$\beta=0.5$.

Some additional technical details about the experiments are also
as follows. First, all of the
tables below report the total number of innermost iterations that each of the methods need to obtain a quadruple satisfying \eqref{eq:comp_crit} below.
This is done so that the iteration
cost for reported in our experiments
are comparable in the sense that each method require ${\cal O}(1)$ resolvent
and gradient evaluations per iteration.
Second, the  algorithms are implemented in MATLAB
2021a and are run on a Windows 64-bit machine with two Intel(R) Xeon(R) Gold 6240 processors and 12 GB of RAM. Third, bold text in the tables of this section indicates the method that performed the most efficiently in a particular metric and problem instance. Finally, the log KKT gap in the experiments below refers to the normalized quantity
\[
\hat{r}:=\log_{10}\left(\max\left\{ \frac{\|\hat{w}\|}{1+\|\nabla f(z_{0})\|},\frac{\|A\hat{z}-b\|}{1+\|Az_{0}-b\|}\right\} \right).
\]

\subsection{Quadratic SDP}

\label{subsec:qsdp}

This subsection presents the performance of IAIPAL method against
several benchmark methods on a set of nonconvex quadratic semidefinite
programming (QSDP) problems. 

Given a pair of dimensions $(\ell,n)\in\mathbb{N}^{2}$, a scalar
pair $(\omega_1,\omega_2)\in\R_{++}^{2}$, linear operators ${\cal {\cal Q}}:\mathbb{S}_{+}^{n}\mapsto\R^{\ell}$,
${\cal {\cal B}}:\mathbb{S}_{+}^{n}\mapsto\R^{n}$, and ${\cal {\cal C}}:\mathbb{S}_{+}^{n}\mapsto\R^{\ell}$
defined by 
\[
\left[{\cal Q}(Z)\right]_{i}=\left\langle Q_{i},Z\right\rangle ,\quad\left[{\cal B}(Z)\right]_{j}=\left\langle B_{j},Z\right\rangle ,\quad\left[{\cal C}(Z)\right]_{i}=\left\langle C_{i},Z\right\rangle ,
\]
for matrices $\{Q_{i}\}_{i=1}^{\ell},\{B_{j}\}_{j=1}^{n},\{C_{i}\}_{i=1}^{\ell}\subseteq\R^{n\times n}$,
positive diagonal matrix $D\in\R^{n\times n}$, and a vector pair
$(b,d)\in\R^{\ell}\times\R^{\ell}$, this subsection considers the
following QSDP: 
\begin{align*}
\min_{Z}\  & \left[f(Z):=-\frac{\omega_{1}}{2}\|D{\cal B}(Z)\|^{2}+\frac{\omega_{2}}{2}\|{\cal C}(Z)-d\|^{2}\right]\\
\text{s.t.}\  & {\cal Q}(Z)=b,\quad Z\in P^{n},
\end{align*}
where $P^{n}=\{Z\in\mathbb{S}_{+}^{n}:{\rm trace}\,(Z)=1\}$. In particular, the problem instances tested are given in Table~\ref{tab:qmp}.

\begin{table}[tbh]
\begin{centering}
\begin{tabular}{>{\raggedright}m{0.4cm}>{\raggedright}m{0.4cm}>{\centering}p{1.1cm}>{\centering}p{1.1cm}>{\centering}p{1.1cm}>{\centering}p{1.1cm}>{\centering}p{1.1cm}>{\centering}p{1.1cm}>{\centering}p{1.1cm}}
\toprule 
\multicolumn{9}{c}{\textbf{\scriptsize{}Log KKT Gap / Log Function Value (row 1) }}\tabularnewline
\multicolumn{9}{c}{\textbf{\scriptsize{}Iteration count / Runtime (row 2)}}\tabularnewline
\midrule
{\tiny{}$m_{f}$} & {\tiny{}$L_{f}$} & {\tiny{}iALM} & {\tiny{}QP} & {\tiny{}QP(A)} & {\tiny{}RQP} & {\tiny{}IPL} & {\tiny{}IPL(A1)} & {\tiny{}IPL(A2)}\tabularnewline
\midrule 
{\tiny{}$10^{0}$} & {\tiny{}$10^{2}$} & {\tiny{}-4.1/-0.98} & {\tiny{}-4.2/-0.98} & {\tiny{}-4.0/-0.98} & {\tiny{}-4.2/-0.98} & {\tiny{}-4.0/-0.98} & {\tiny{}-4.0/-0.98} & {\tiny{}-4.0/-0.98}\tabularnewline
 &  & {\tiny{}20.4/10.3} & {\tiny{}4.1/3.8} & {\tiny{}3.2/3.5} & {\tiny{}1.9/2.1} & {\tiny{}3.0/3.1} & \textbf{\tiny{}1.4/1.7} & {\tiny{}1.4/1.7}\tabularnewline
\midrule
{\tiny{}$10^{0}$} & {\tiny{}$10^{3}$} & {\tiny{}-4.1/0.04} & {\tiny{}-4.2/0.04} & {\tiny{}-4.3/0.04} & {\tiny{}-4.2/0.04} & {\tiny{}-4.0/0.04} & {\tiny{}-4.0/0.04} & {\tiny{}-4.0/0.04}\tabularnewline
 &  & {\tiny{}36.2/18.6} & {\tiny{}3.9/3.7} & {\tiny{}4.4/4.7} & {\tiny{}2.0/2.2} & {\tiny{}1.7/1.7} & \textbf{\tiny{}0.8/0.9} & {\tiny{}0.8/0.9}\tabularnewline
\midrule
{\tiny{}$10^{0}$} & {\tiny{}$10^{4}$} & {\tiny{}-4.3/1.04} & {\tiny{}-4.3/1.04} & {\tiny{}-4.3/1.04} & {\tiny{}-4.3/1.04} & {\tiny{}-4.3/1.04} & {\tiny{}-4.3/1.04} & {\tiny{}-4.3/1.04}\tabularnewline
 &  & {\tiny{}102.1/51.5} & {\tiny{}4.1/3.7} & {\tiny{}9.3/10.1} & {\tiny{}2.5/2.7} & {\tiny{}1.3/1.3} & \textbf{\tiny{}0.6/0.8} & {\tiny{}0.6/0.8}\tabularnewline
\midrule
{\tiny{}$10^{1}$} & {\tiny{}$10^{5}$} & {\tiny{}-4.3/2.04} & {\tiny{}-4.3/2.04} & {\tiny{}-4.3/2.04} & {\tiny{}-4.3/2.04} & {\tiny{}-4.3/2.04} & {\tiny{}-4.3/2.04} & {\tiny{}-4.3/2.04}\tabularnewline
 &  & {\tiny{}104.3/52.5} & {\tiny{}4.1/3.8} & {\tiny{}9.3/10.0} & {\tiny{}2.5/2.8} & {\tiny{}3.3/3.4} & {\tiny{}1.5/1.8} & \textbf{\tiny{}0.6/0.8}\tabularnewline
\midrule 
{\tiny{}$10^{2}$} & {\tiny{}$10^{5}$} & {\tiny{}-4.3/2.04} & {\tiny{}-4.2/2.04} & {\tiny{}-4.3/2.04} & {\tiny{}-4.2/2.04} & {\tiny{}-4.1/2.04} & {\tiny{}-4.1/2.04} & {\tiny{}-4.0/2.04}\tabularnewline
 &  & {\tiny{}48.9/24.7} & {\tiny{}3.9/3.6} & {\tiny{}4.4/4.8} & {\tiny{}2.0/2.2} & {\tiny{}2.4/2.4} & {\tiny{}1.0/1.3} & \textbf{\tiny{}0.8/0.9}\tabularnewline
\midrule 
{\tiny{}$10^{3}$} & {\tiny{}$10^{5}$} & {\tiny{}-4.2/2.02} & {\tiny{}-4.1/2.02} & {\tiny{}-4.0/2.02} & {\tiny{}-4.2/2.02} & {\tiny{}-4.0/2.02} & {\tiny{}-4.0/2.02} & {\tiny{}-4.0/2.02}\tabularnewline
 &  & {\tiny{}39.8/20.3} & {\tiny{}4.4/4.1} & {\tiny{}3.7/3.9} & {\tiny{}2.1/2.4} & {\tiny{}2.0/2.1} & \textbf{\tiny{}0.9/1.1} & {\tiny{}0.9/1.1}\tabularnewline
\bottomrule
\end{tabular}
\par\end{centering}
\centering{}\caption{Results for constant $m_{f}$ and variable $L_{f}$. Within each $(m_{f},L_{f})$
multirow, the first row presents $\log_{10}$ KKT gaps / $\log_{10}$
function values, while the second row presents iteration counts (in
thousands) / runtimes (in tens of seconds). Log function value is
computed as $\log_{10}\phi(\hat{z})$.\label{tab:qmp}}
\end{table}

We now describe the experiment parameters. First, the dimensions are $(\ell,n)=(30,100)$
and only 5\% of the entries of $Q_{i},B_{j},$ and $C_{i}$ are nonzero.
Second, the entries of $Q_{i}$, $B_{j}$, $C_{i}$, $D$, $b$, and
$d$ are generated using the procedure described in \cite[Subsection 5.5.2.1]{KongThesis2021}.
Fifth, given a starting point $z_{0}\in\r^{n\times n}$, all of the
methods attempt to find a quadruple $(\hat{z},\hat{p},\hat{w},\hat{q})$
satisfying $\hat{w}\in\nabla f(\hat{z})+\partial\delta_{P^{n}}(\hat{z})+{\cal Q}^{*}\hat{p}$ and 
\begin{gather}
\frac{\|\hat{w}\|}{1+\|\nabla f(z_{0})\|}\leq \hat\rho ,\quad\frac{\|{\cal{Q}}\hat{z}-b\|}{1+\|{\cal{Q}}z_{0}-b\|}\leq\hat\eta,
\label{eq:comp_crit}
\end{gather}
with $\hat\rho = \hat\eta = 10^{-4}$.
Sixth, using the fact that $\|Z\|_{F}\leq1$ for every $Z\in P_{n}$,
the constant hyperparameters for the IPL and iALM methods are set
to $L_{g}=0$, $L_{j}=0$, $\rho_{j}=0$, and $B_{j}=\|Q_{j}\|_{F}$
for $1\leq j\leq\ell$. Finally, each problem instance considered
is based on a specific pair $(m_{f},L_{f})$ for which the scalar
pair $(\omega_{1},\omega_{2})$ is selected so that $L_{f}=\lam_{\max}(\nabla^{2}f)$
and $m_{f}=-\lam_{\min}(\nabla^{2}f)$.

We now make several observations and conclusions based on these tables.
First, comparing
the results between QP(A)
and
RQP, we conclude that the presence of
an adaptive prox stepsize search in the latter method considerably improves
its performance compared
to the former.
Second, IPL(A2) is the direct counterpart of QP(A), but
its performance is
better than
the improved version of
QP(A), namely
RQP, in nine out of ten problem instances.
Third, in view of the
first remark above,
it is reasonable to infer that
IPL(A1) and IPL(A2) could be considerably
improved if the prox parameter $\lam$ is adaptively chosen. As the analysis
for such an IPL variant involves several technical difficulties, we leave its development for a future work.

\subsection{Comparison with an  \texorpdfstring{${\cal O}(\varepsilon^{-2})$}{O(ε^{-2}} complexity method}

\label{subsec:numerical_cmp}


We start by comparing and contrasting the theoretical properties of
each method. First, both the IAIPAL method and the S-prox-ALM are augmented
Lagrangian-based methods applied to NCO problems.
More specifically, SPA considers \eqref{optl0} under the requirement
that $h$ is the indicator function of a polyhedron. Second, the S-prox-ALM
also considers a sequence of proximal subproblems as in \eqref{exactzk0}, and applies a single composite gradient step to inexact solve \eqref{exactzk0} instead of an ACG-type subroutine. Finally, while the IAIPAL method only requires choosing its parameters based
on the scalars $m_{f}$, $L_{f}$, and $\|A\|$ to guarantee convergence, the S-prox-ALM requires
choosing its parameters based on the supremum of a set of Hoffman
constants (see the proof of \cite[Lemma 3.10]{ADMMJzhang-ZQLuo2020} and \cite[Lemma 4.8]{ADMMJzhang-ZQLuo2020})
that is generally difficult to compute.

We now present some numerical results that compare the S-prox-ALM variants against IP(A1), QP(A), and RQP. Since the S-prox-ALM does not have convergence guarantees for the QSDP problem
in Subsection~\ref{subsec:qsdp} (because the domain of $h$ is not
polyhedral), we consider the vector variant of the QSDP. More specifically,
given a pair of dimensions $(\ell,n)\in\mathbb{N}^{2}$, a scalar
pair $(\omega_1, \omega_2)\in\R_{++}^{2}$, matrices $Q,C\in\R^{\ell\times n}$
and $B\in\r^{n\times n}$, positive diagonal matrix $D\in\R^{n\times n}$,
and a vector pair $(b,d)\in\R^{\ell}\times\R^{\ell}$, we consider
the problem 
\begin{align*}
\min_{z}\  & \left[f(z)-\frac{\omega_{1}}{2}\|DBz\|^{2}+\frac{\omega_{2}}{2}\|{\cal C}z-d\|^{2}\right]\\
\text{s.t.}\  & Qz=b,\quad z\in\Delta^{n},
\end{align*}
where $\Delta^{n}:=\{x\in\rn:\sum_{i=1}^{n}x_{i}=1\}$. In particular, the problem instances tested are given in Table~\ref{tab:qvp}.

\begin{table}[tbh]
\begin{centering}
\begin{tabular}{>{\raggedright}m{0.4cm}>{\raggedright}m{0.4cm}>{\centering}p{1.1cm}>{\centering}p{1.1cm}>{\centering}p{1.1cm}>{\centering}p{1.1cm}>{\centering}p{1.1cm}>{\centering}p{1.1cm}}
\toprule 
\multicolumn{8}{c}{\textbf{\scriptsize{}Log KKT Gap / Log Function Value (row 1)}}\tabularnewline
\multicolumn{8}{c}{\textbf{\scriptsize{}Iteration count (row 2)}}\tabularnewline
\midrule
{\tiny{}$m_{f}$} & {\tiny{}$L_{f}$} & {\tiny{}QP(A)} & {\tiny{}RQP} & {\tiny{}IPL(A1)} & {\tiny{}SPA1} & {\tiny{}SPA2} & {\tiny{}SPA3}\tabularnewline
\midrule 
{\tiny{}$10^{0}$} & {\tiny{}$10^{2}$} & {\tiny{}-2.0/2.30} & {\tiny{}-2.9/2.30} & \textbf{\tiny{}-3.2}{\tiny{}/2.30} & {\tiny{}-2.3/2.30} & {\tiny{}-1.7/2.30} & {\tiny{}-1.1/2.30}\tabularnewline
 &  & {\tiny{}11.5} & {\tiny{}12.2} & {\tiny{}11.6} & {\tiny{}9.9} & {\tiny{}9.8} & {\tiny{}9.9}\tabularnewline
\midrule
{\tiny{}$10^{0}$} & {\tiny{}$10^{3}$} & {\tiny{}-1.2/2.30} & {\tiny{}-2.6/2.30} & \textbf{\tiny{}-3.2}{\tiny{}/2.30} & {\tiny{}-1.9/2.30} & {\tiny{}-1.7/2.30} & {\tiny{}-1.1/2.30}\tabularnewline
 &  & {\tiny{}11.7} & {\tiny{}12.2} & {\tiny{}11.2} & {\tiny{}8.8} & {\tiny{}9.8} & {\tiny{}9.8}\tabularnewline
\midrule
{\tiny{}$10^{0}$} & {\tiny{}$10^{4}$} & {\tiny{}-0.7/2.30} & {\tiny{}-2.3/2.30} & \textbf{\tiny{}-4.1}{\tiny{}/2.30} & {\tiny{}-1.7/2.30} & {\tiny{}-1.7/2.30} & {\tiny{}-1.5/2.30}\tabularnewline
 &  & {\tiny{}11.9} & {\tiny{}12.1} & {\tiny{}13.6} & {\tiny{}9.9} & {\tiny{}9.9} & {\tiny{}9.9}\tabularnewline
\midrule
{\tiny{}$10^{1}$} & {\tiny{}$10^{5}$} & {\tiny{}-0.7/2.39} & {\tiny{}-2.3/2.40} & \textbf{\tiny{}-4.1}{\tiny{}/2.41} & {\tiny{}-0.7/2.39} & {\tiny{}-1.4/2.41} & {\tiny{}-1.7/2.41}\tabularnewline
 &  & {\tiny{}11.9} & {\tiny{}12.2} & {\tiny{}12.2} & {\tiny{}9.9} & {\tiny{}9.9} & {\tiny{}9.8}\tabularnewline
\midrule 
{\tiny{}$10^{2}$} & {\tiny{}$10^{5}$} & {\tiny{}-0.9/2.35} & {\tiny{}-2.5/2.37} & \textbf{\tiny{}-3.4}{\tiny{}/2.37} & {\tiny{}-0.7/2.35} & {\tiny{}-1.4/2.37} & {\tiny{}-1.8/2.37}\tabularnewline
 &  & {\tiny{}10.2} & {\tiny{}12.2} & {\tiny{}8.5} & {\tiny{}9.9} & {\tiny{}9.9} & {\tiny{}9.9}\tabularnewline
\midrule 
{\tiny{}$10^{3}$} & {\tiny{}$10^{5}$} & {\tiny{}-1.5/1.81} & {\tiny{}-2.4/1.83} & \textbf{\tiny{}-7.0}{\tiny{}/1.83} & {\tiny{}-0.6/1.75} & {\tiny{}-1.3/1.87} & {\tiny{}-2.0/1.87}\tabularnewline
 &  & {\tiny{}9.5} & {\tiny{}12.0} & {\tiny{}3.0} & {\tiny{}9.9} & {\tiny{}9.9} & {\tiny{}9.7}\tabularnewline
\bottomrule
\end{tabular}
\par\end{centering}
\centering{}\caption{Results for constant $m_{f}$ and variable $L_{f}$. Within each $(m_{f},L_{f})$
multirow, the first row presents $\log_{10}$ KKT gaps / $\log_{10}$
function values, while the second row presents iteration counts (in
thousands). All experiments were run until 600 seconds have passed.
Log function value is computed as $\log_{10}(200+\phi(\hat{z}))$.\label{tab:qvp}}
\end{table}

We now describe the experiment parameters for the problem instances
considered. First, the dimension pair is $(\ell,n)=(20,1000)$ and
all generated matrices  have full density. Second, the entries
of $Q$, $B$, $C$, and $d$ (resp. $D$) are generated by sampling
from the uniform distribution ${\cal U}[0,1]$ (resp. ${\cal U}\{1,...,1000\}$).
Third, the vector $b$ is set to $b=Q(\boldsymbol{e}/n)$ where $\boldsymbol{e}$
is a vector of all ones. Fourth, the initial starting point $z_{0}$
is a set to be $\tilde{z}/\sum_{i=1}^{n}\tilde{z}_{i}$ where the
entries of $\tilde{z}$ are sampled from the ${\cal U}[0,1]$ distribution.
Fifth, given a starting point $z_{0}\in\r^{n}$, all of the methods
attempt to find a quadruple $(\hat{z},\hat{p},\hat{w},\hat{q})$ satisfying $\hat{w}\in\nabla f(\hat{z})+\partial\delta_{\Delta^{n}}(\hat{z})+Q^{*}\hat{p}$ and \eqref{eq:comp_crit} with $\hat\rho=\hat\eta=10^{-7}$. 
Finally, all experiments are run with a time limit of 600 seconds.

From the results, we
can see that the IPL(A2) variant is substantially more efficient than SPA1--SPA3, QP(A), and RQP. We also notice that SPA1 (resp. SPA3)
tends to perform better when $L_{f}$ is small (large).

\section{Concluding remarks}\label{sec:conclusion}

This paper proposes the IAIPAL method
for finding a $(\hat\rho,\hat\eta)$-approximate stationary point (see Definition~\ref{def:stationarypoint}) of a class of
linearly-constrained smooth NCO problems,
and establishes, up to logarithmic terms, an
$\mathcal{O}(\hat\rho^{-5/2}+\hat\rho^{-2}\hat\eta^{-1/2})$  
ACG iteration complexity bound for it.
Moreover, IAIPAL is the first PAL method with provable complexity bounds for the case where $(\theta, \chi_k)=(0,1)$ in \eqref{eq:gen_dual_update2} for every $k\geq 1$.
Computational results also show that
IAIPAL substantially outperforms other algorithms in the literature for solving \eqref{optl0} (or special cases of it).

We now discuss some possible extensions of our paper. First, it is worth developing an adaptive variant of IAIPAL as described in the conclusion of Section~\ref{sec:numerical}. Second, one could analyze the convergence and computational behavior of IAIPAL under the multiplier update rule $p_{k+1}=p_k+ \chi c(A z_k-b)$ where $\chi$ is a positive scalar lying in a certain range. Finally, it is worth investigating whether the iteration-complexity of IAIPAL can be improved, possibly for special instances of \eqref{optl0}.

	\appendix
	\section{Other technical results}
This section is divided into three subsections. The first one revise an accelerated gradient method used for solving the IAIPAL subproblems. The second subsection establishes a result, using convex analysis, that is used to prove Lemma~\ref{lem:bound_xiN}. The last subsection presents a result regarding a refinement procedure related to the pair $(\hat z,\hat{w})$ computed in step~2 of S-IAIPAL.

		\subsection{An accelerated composite gradient method}\label{sec-nesterov}
	
	
    Consider the following composite optimization problem
	\begin{equation}\label{mainprob:nesterov1}
	\min \{\psi(x):=\psi_s(x)+\psi_n(x) :  x \in \Re^n\}
	\end{equation}
	where the following conditions are assumed to hold:
	\begin{itemize}
		\item [{\bf(A1)}]$\psi_n:\Re^n\rightarrow (-\infty,+\infty]$ is a proper closed  convex  function;
		\item [{\bf (A2)}]$\psi_s$ is a convex differentiable function on $\dom \psi_n$
		and there exists  $({\widetilde \mu},{\widetilde M})\in \r_{+}^2$ satisfying $\widetilde M > \widetilde \mu$ and
		${\widetilde \mu}\|u-x\|^2/2 \leq \psi_s(u)-\ell_{\psi_s}(u; x) 
		\leq  {\widetilde M}\|u-x\|^2/2$  for every $x, u \in \dom \psi_n$, where $\ell_{\psi_s}(\cdot \,;\cdot)$ is defined in \eqref{eq:defell}.
	\end{itemize}


	We are now ready to state ACG. It is worth mentioning that
    other ACG variants such as the ones
	in \cite{Attouch2016,YHe2,nesterov2012gradient,nesterov1983method}
	could also be used in the development of
	 IAIPAL.

\vgap

\hrule
\vspace*{0.5em}
\noindent\quad\textbf{ACG}
\vspace*{0.5em}
\hrule
\vspace*{0.5em}
\begin{itemize}
\item[(0)] Let a pair of functions $(\psi_{s},\psi_{n})$ satisfying \textbf{(A1)
}and \textbf{(A2)} for some $({\widetilde \mu},\widetilde M)\in\r_{+}^{2}$, a scalar
$\tilde{\sigma}>0$, and an initial point $y_{0}\in\dom\psi_{n}$
be given; set $x_{0}=y_{0}$, $A_{0}=0$, $\tau_0=1$, $\zeta = 1/({\widetilde M} - {\widetilde \mu})$, and $j=0$;
\item[(1)] compute the iterates 
\begin{align*}
 a_{j} & =\frac{\zeta\tau_{j}+\sqrt{(\zeta\tau_{j})^{2}+4\tau_{j}A_{j}}}{2}, \quad A_{j+1}=A_{j}+a_{j}, \quad \tilde{x}_{j} =\frac{A_{j} y_j + a_j x_j}{A_{j+1}}\\
\tau_{j+1} & =\tau_{j} +{\widetilde \mu} a_{j}, \quad y_{j+1} =\argmin_{y\in\rn}\left\{ \ell_{\psi_s}(y;\tilde{x}_j) + \psi_n(y) +\frac{{\widetilde M}}{2}\|y-\tilde{x}_{j}\|^{2}\right\}, \\
x_{j+1} & = \frac{1}{\tau_{j+1}} \left[\frac{a_j}{\zeta}(y_{j+1} - \tilde{x}_j) + {\widetilde \mu} a_j y_{j+1} + \tau_j x_j \right];
\end{align*}
\item[(2)] compute the quantities
\begin{align*}
 u_{j+1} &= {\widetilde \mu}(y_{j+1} - x_{j+1}) + \frac{x_{0}-x_{j+1}}{A_{j+1}}, \\
 \eta_{j+1} &= \frac{1}{2 A_{j+1}}\left(\|x_0 - y_{j+1}\|^2 - \tau_{j+1}\|x_{j+1}-y_{j+1}\|^2\right);
\end{align*}
\item[(3)] if the inequality 
\[
\|u_{j+1}\|^{2}+2\eta_{j+1}\leq\tilde{\sigma}^{2}\|y_{0}-y_{j+1}+u_{j+1}\|^{2}
\]
holds, then stop and output $(y,u,\eta):=(y_{j+1},u_{j+1},\eta_{j+1})$;
otherwise, set $j=j+1$ and go to (1).
\end{itemize}
\vspace*{0.5em}
\hrule
\vgap
	
 	Some remarks about ACG follow. First, the most common way of describing an iteration of ACG is as in step~1. 
	Second, the
	auxiliary iterates $\{u_j\}$ and $\{\eta_j\}$ computed  in step~2 are  used to develop a stopping criterion for ACG  when it is called as a subroutine for solving the subproblems generated in step~1 of S-IAIPAL
	in Subsection~\ref{Sec:static-IPAAL}.
Third, it can be shown (see for example   \cite{florea2018accelerated, fistaReport2021}) that ACG (without steps 2 and 3) with ${\widetilde \mu}=0$ corresponds to the well-known FISTA algorithm. 
	Fourth, the sequence $\{A_j\}$ has the following increasing property:
	$$ A_{j}\geq\frac{1}{{\widetilde M-\widetilde \mu}}\max\left\{\frac{j^{2}}{4},\left(1+\sqrt{\frac{{\widetilde \mu}}{4(\widetilde M-\widetilde \mu)}}\right)^{2(j-1)}\right\}, \qquad \forall j\geq 1. 
	$$
Finally, it is worth mentioning that adaptive variants\footnote{The closest variant to ACG in this paper can be found in \cite[Section 5.2]{KongThesis2021}.} of ACG have been studied, for example, in \cite{beck2009fast, KongThesis2021, pmlr-v32-lin14, nesterov2012gradient, parikh2014proximal}. A simple level of adaptiveness
used in these variants, which is also used
inside some of the methods benchmarked in Section~4,
is to replace ${\widetilde M}$ in
the computation of $y_j$ in step~1 by
an estimate $M_j$ computed as follows:
$M_j$ is initially set to be $M_{j-1}$ and, if necessary, is repeatedly increased (either additively, multiplicatively, or both) until the inequality $\psi_s(y_j) - \ell_{\psi_s}(y_j;\tilde x_{j-1})\leq M_{j} \|y_j - \tilde{x}_{j-1}\|^2 / 2 $ is satisfied.

The next result, whose proof can be found in \cite[Lemma~2.13]{fistaReport2021}, summarizes  the main properties of ACG used in this paper.

	\begin{proposition}\label{prop:nest_complex}
		Let $\{(y_j, u_j,\eta_j)\}_{j\geq 1}$ be the sequence generated by ACG applied to \eqref{mainprob:nesterov1},
		where $(\psi_s,\psi_n)$ is a given pair of data functions satisfying {\bf (A1)} and {\bf (A2)}. 
		Then, the following statements hold: 
		\begin{itemize}
			\item [a)] for every $j\geq 1$, we have 
			$u_j\in  \partial_{\eta_j}(\psi_s+\psi_n)(y_j)$;
			\item[b)] for any $\tilde \sigma>0$, the ACG method outputs a triple $(y,u,\eta)$ 
			  satisfying  
			$$
			u\in  \partial_{\eta}(\psi_s+\psi_n)(y) \quad \|u\|^{2}+2\eta\le \tilde\sigma^2\|y_{0}-y+u\|^{2}
			$$
			in at most 
			\begin{equation}
\left\lceil\left(\frac{1}{2}+\sqrt{\frac{\widetilde M-\widetilde\mu}{\widetilde\mu}}\right)\log_{1}^{+}\left(\left[\widetilde M-\widetilde\mu\right]{\cal A}_{\widetilde\mu,\widetilde\sigma}\right)+1 \right\rceil
\end{equation}
iterations, where
${\cal A}_{\widetilde\mu,\widetilde\sigma}:=(2\widetilde\mu+3)(1+\widetilde\sigma)^{2}/\widetilde\sigma^2$.
		\end{itemize}
	\end{proposition}

\subsection{A convex analysis result}
This subsection  contains  a technical result of convex analysis. It  derives several characterizations of
	condition {\bf (B2)} and establishes  an important
	inclusion that is used in the
	proof of Lemma~\ref{lem:bound_xiN}.
	
	\begin{lemma}\label{Prop:eps-subd-bounded-cone}
		Let $h \in \bConv{n}$ and $L_h \ge 0$ be given.
		Then, the following statements are equivalent:
		\begin{itemize}
			\item[a)]
			for every $z,z' \in {\cal H}$, we have
			$
			h(z') \le h(z) + L_h \|z'-z\|;
			$
			\item[b)]
			for every $z,z' \in {\cal H}$, we have
			$
			h'(z;z'-z) \le L_h \|z'-z\|;
			$
 			\item[c)]
			for every $z,z' \in {\cal H}$ and $s \in \partial h(z)$, we have
			$
			\inner{s}{z'-z} \le L_h \|z'-z\|;
			$
			\item[d)]
			for every $z \in {\cal H}$, we have
			$
			\partial h(z) \subset \bar B(0;L_h) + N_{\cal H}(z);
			$
			\item[e)]
			for every $z \in {\cal H}$, we have
			$\partial h(z) \cap \bar B(0;L_h) \ne \emptyset.$
		\end{itemize}
		Moreover, any of the above conditions imply
		that:
		\begin{itemize}
		    \item[i)] ${\cal H}$ is closed;
		    \item[ii)]
		    for any $z \in {\cal H}$ and $\varepsilon \ge 0$, we have
		    $
		\partial_{\varepsilon} h(z) \subset  \bar B(0;L_h)
		+ N_{\cal H}^{\varepsilon}(z).
		$
		\end{itemize}
	\end{lemma}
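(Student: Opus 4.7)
The plan is to prove the equivalence of (a)--(e) via the chain (a)$\Rightarrow$(b)$\Rightarrow$(c)$\Rightarrow$(d) combined with the equivalence (a)$\Leftrightarrow$(e) established by a Lipschitz-extension trick; the cycle closes via the easy (d)$\Rightarrow$(c) together with (c)$\Rightarrow$(a) handled by a relative-interior and lsc argument.

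For (a)$\Rightarrow$(b), I would divide $h(z+t(z'-z))-h(z)\le L_h t\|z'-z\|$ by $t$ and let $t\to 0^+$, valid because the segment stays in ${\cal H}$ by convexity. For (b)$\Rightarrow$(c), I would use the standard bound $\inner{s}{z'-z}\le h'(z;z'-z)$ valid for every $s \in \partial h(z)$. For (c)$\Rightarrow$(d), I would argue by support-function duality: $s \in \bar B(0,L_h)+N_{\cal H}(z)$ is equivalent to $\inner{s}{v}\le L_h\|v\|+\sigma_{N_{\cal H}(z)}(v)$ for every $v$, where $\sigma_{N_{\cal H}(z)}$ vanishes on the tangent cone $\overline{\cone({\cal H}-z)}$ and is $+\infty$ elsewhere; condition (c) supplies the bound on the set ${\cal H}-z$, which extends by positive scaling and continuity to the whole tangent cone. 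The reverse (d)$\Rightarrow$(c) is a one-line decomposition $s=s_1+s_2$ with $\|s_1\|\le L_h$ and $s_2\in N_{\cal H}(z)$ followed by $\inner{s_2}{z'-z}\le 0$ for $z' \in {\cal H}$.

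The key ingredient for the remaining equivalences is the infimal-convolution extension $\tilde h(x):=\inf_{y\in\Re^n}\{h(y)+L_h\|x-y\|\}$. Under (a), a routine check shows $\tilde h$ is convex, $L_h$-Lipschitz on $\Re^n$, everywhere finite, and agrees with $h$ on ${\cal H}$; hence $h=\tilde h+I_{\cal H}$. The Moreau--Rockafellar sum rule (applicable since $\dom\tilde h=\Re^n$) then yields $\partial h(z)=\partial\tilde h(z)+N_{\cal H}(z)$, and since $\tilde h$ is real-valued convex and $L_h$-Lipschitz, $\partial\tilde h(z)$ is a nonempty subset of $\bar B(0,L_h)$. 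This simultaneously delivers (d) and (e) from (a). Conversely, (e)$\Rightarrow$(a) is immediate: pick $s\in\partial h(z')\cap\bar B(0,L_h)$ and combine the subgradient inequality with Cauchy--Schwarz.

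The main obstacle is (c)$\Rightarrow$(a), needed to complete the cycle, since $\partial h(z')$ may be empty at a relative-boundary point of ${\cal H}$. I would handle this by first applying (c) on $\ri{\cal H}$ (where $\partial h$ is nonempty by standard convex analysis) to obtain (a) on $\ri{\cal H}$, then extending to all of ${\cal H}$ using the lower semicontinuity of $h$ together with the density of $\ri{\cal H}$ in ${\cal H}$. For the supplementary items: (i) follows because under (a), any sequence $\{z_k\}\subset{\cal H}$ with $z_k\to z$ has $\{h(z_k)\}$ Cauchy, so lsc forces $h(z)<\infty$ and thus $z\in{\cal H}$. For (ii), I would apply the $\varepsilon$-subdifferential sum rule to $h=\tilde h+I_{\cal H}$, obtaining $\partial_\varepsilon h(z)=\bigcup_{\varepsilon_1+\varepsilon_2=\varepsilon}[\partial_{\varepsilon_1}\tilde h(z)+N_{\cal H}^{\varepsilon_2}(z)]$; the $L_h$-Lipschitz continuity of $\tilde h$ forces $\partial_{\varepsilon_1}\tilde h(z)\subset\bar B(0,L_h)$ for every $\varepsilon_1\ge 0$ (test the $\varepsilon_1$-subgradient inequality at $z'=z+tu$ with $\|u\|=1$ and send $t\to\infty$), and since $N_{\cal H}^{\varepsilon_2}(z)\subset N_{\cal H}^{\varepsilon}(z)$, the claimed inclusion follows.
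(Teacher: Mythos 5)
Your argument is correct in outline but closes the equivalence cycle by a genuinely different route than the paper. The paper proves a)\,$\Rightarrow$\,b)\,$\Rightarrow$\,c)\,$\Rightarrow$\,d)\,$\Rightarrow$\,e)\,$\Rightarrow$\,a), where the delicate link is d)\,$\Rightarrow$\,e): at $z\in\ri{\cal H}$ it exploits that $N_{\cal H}(z)$ is a subspace to move the normal component back into $\partial h(z)$, and then passes to $\rbd{\cal H}$ by a Bolzano--Weierstrass argument using the closedness of $\Gr(\partial h)$; item ii) is then obtained from the local majorant $\phi_z:=h(z)+L_h\|\cdot-z\|+I_{\cal H}$ touching $h$ at $z$. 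You instead introduce the infimal convolution $\tilde h=h\,\square\,(L_h\|\cdot\|)$, verify $h=\tilde h+I_{\cal H}$ under a), and invoke Moreau--Rockafellar; this single device delivers d), e) and ii) uniformly and entirely avoids the limiting argument at relative-boundary points, which is an attractive simplification. The price is that your cycle now needs c)\,$\Rightarrow$\,a), which the paper never uses, and this is the one step you should spell out with care: the tempting one-line argument --- pick $s'\in\partial h(z')$ and apply the subgradient inequality --- does \emph{not} work, because c) bounds $\inner{s'}{u-z'}$ from above only for $u\in{\cal H}$, whereas you would need a bound on $\inner{s'}{z'-z}=-\inner{s'}{z-z'}$, i.e.\ a lower bound. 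The correct route is to use, at each $z_t$ on the open segment (which lies in $\ri{\cal H}$), the representation $h'(z_t;z'-z)=\sup\{\inner{s}{z'-z}:s\in\partial h(z_t)\}\le L_h\|z'-z\|$ and integrate $t\mapsto h(z+t(z'-z))$ over $[0,1]$, then extend to ${\cal H}$ by the lsc/continuity-along-segments argument you sketch. With that step made explicit, your proof is complete; for ii), note also that your equality form of the $\varepsilon$-sum rule is legitimate here precisely because $\tilde h$ is finite and continuous everywhere, though the paper's one-sided inclusion applied to $\phi_z$ is slightly more economical.
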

	
	\begin{proof}
	\noindent	[$a) \Rightarrow b)$] This statement follows from the fact that
		$h(z')-h(z) \ge h'(z;z'-z)$ for every $z,z' \in {\cal H}$
		(see  \cite[Theorem~23.1]{Rockafellar70}).
		
	\noindent	[$b) \Rightarrow c)$] This statement follows from the fact that
		$h'(z;z'-z)\geq \inner{s}{z'-z}$ for every $z,z' \in {\cal H}$ and $s\in \partial h(z)$,
		(see  \cite[Theorem~23.2]{Rockafellar70}).

	\noindent	[$c) \Rightarrow d)$]
		Letting $T_{\cal H}(z) = \cl( \R_+ \cdot ({\cal H}-z) )$ and $N_{\cal H}(z)$
		denote the tangent
		cone and normal cone of ${\cal H}$ at $z$, respectively,
		and letting $S:= \bar B(0;L_h) + N_{\cal H}(z)$,
		we easily see that c) is equivalent to
		\[
		\inner{s}{\cdot}  \le  L_h\|\cdot\| + I_{T_{\cal H}(z)} (\cdot) 
		= \sigma_{\bar B(0;L_h)}(\cdot) + \sigma_{N_{\cal H}(z)}(\cdot) = \sigma_S(\cdot) \quad \forall s \in \partial h(z),
		\]
		where  the first equality follows in view of the discussion in  page 115 of \cite{Rockafellar70} and   \cite[Example 2.3.1 combined with Proposition 5.2.4]{Hiriart1}, the last equality is due to  \cite[Corollary 16.4.1]{Rockafellar70}.	
		Since the above hold for every $s \in \partial h(z)$, we conclude that
		$\sigma_{\partial h(z)} \leq \sigma_{S}$.
		Since both $\partial h(z)$ and $S$ are closed,
		it follows from \cite[Corollary 13.1.1]{Rockafellar70}  that
		$\partial h(z) \subset S = \bar B(0;L_h) + N_{\cal H}(z)$.
		
		\noindent [$d) \Rightarrow e)$]
		Assume that d) holds. We will first show that
		e) holds for every $z \in \ri {\cal H}$. Indeed,
		assume that $z \in \ri {\cal H}$. This implies that
		$N_{\cal H}(z)$ is a subspace, namely, the one orthogonal
		to the subspace parallel to the affine hull of ${\cal H}$.
		It follows from d) that there exists
		$s \in \partial h(z)$ and $n \in N_{\cal H}(z)$
		such that $\|s-n\| \le L_h$. Since $N_{\cal H}(z)$
		is a subspace, it follows that $-n \in N_{\cal H}(z)$.
		The claim now follows by the observation
		that $s \in \partial f(z)$ and $-n \in N_{\cal H}(z)$
		immediately implies that $s-n \in \partial f(z)$.
		We will now show that e) also holds for every
		$z \in \rbd {\cal H}$. Indeed, assume that
		$z \in \rbd {\cal H}$. Then, due to \cite[Proposition 2.1.8]{Hiriart1}, there exists
		$\{z_k\} \subset \ri {\cal H}$ such that $z_k$ converges
		to $z$ as $k \to \infty$.
		Since e) holds for every $z \in \ri {\cal H}$
		and $\{z_k\} \subset \ri {\cal H}$, we conclude that
		for every $k$,
		there exists $s_k \in \partial h(z_k)$ such that
		$\|s_k\| \le L_h$. Hence, by Bolzano-Weisstrass'
		theorem, there exists a subsequence
		$\{s_k\}_{k\in {\cal K}}$ converging to some
		$s$, which clearly satisfies $\|s\| \le L_h$.
		Using the fact that $\{(z_k,s_k)\}_{k\in {\cal K}} \in \Gr (\partial h)$ and $\{(z_k,s_k)\}_{k\in {\cal K}}$
		converges to $(z,s)$, and the fact that
		$h \in \bConv{n}$ implies that the set
		$\Gr (\partial h)$ is closed, we then conclude that $(z,s) \in \Gr (\partial h)$, i.e.,
		$s \in \partial h(z)$. We have thus shown that
		e) holds for every $z \in \rbd{\cal H}$ as well.
		
\noindent		[$e) \Rightarrow a)$]
		Let $z,z' \in {\cal H}$ be given and assume that e) holds.
		Then,
		there exists $s' \in \partial h(z')$ such that
		$\|s' \| \le L_h$.
		Hence, 
		$ h(z) - h(z') \ge \inner{s'}{z-z'}
		\ge  - \|s'\| \, \|z'-z\| \ge -L_h \|z'-z\|,
		$ which proves~ a).   

    \noindent    [$a) \Rightarrow i)$] Assume that
        $\{z_k\}\subset {\cal H} $ converges to
        $z$.
        The fact that $h \in \bConv{n}$ and the assumption that (a) holds
        imply that
        \[
        h(z)\leq \liminf_{k\rightarrow +\infty} h(z_k) \leq
        \liminf_{k\rightarrow +\infty} 
        \left(h(z_1)+L_h\|z_k-z_1\|\right)=
        h(z_1)+L_h\|z-z_1\|<+\infty,
        \]
        and hence that
        $z \in {\cal H}$.
        We have thus shown that ${\cal H}$ is closed.

		\noindent[$a) \Rightarrow ii)$] Let $z \in {\cal H}$ and $\varepsilon \ge 0$ be given
		and assume that a) holds.
		Consider
		the function $\phi_z$ defined as
		\[
		 \phi_z(z') := h(z) + L_h\|z'-z\| + I_{\cal H}(z') \quad \forall z' \in \Re^n.
		 \]
		 Clearly, $\phi_z(z) = h(z)$ and $\phi_z \ge h$
		 in view of a).
		 Using these two observations and the
		 definition of the
		 $\varepsilon$-subdifferential given in \eqref{def:epsSubdiff}, we immediately see that
		 $\partial_{\varepsilon} h(z) \subset \partial _{\varepsilon} \phi_z(z)$.
		 On the other hand, using
		 the $\varepsilon$-subdifferential rule for the sum of two convex functions
		 (see \cite[Theorem 3.1.1]{Hiriart2}),
		 we have that
		 \[
		 \partial_{\varepsilon} \phi_z(z)
		 \subset
		 \partial_{\varepsilon} \left(L_h\|\cdot-z\| \right) (z) + \partial_{\varepsilon} I_{\cal H} (z)
		 =  \partial_{\varepsilon} \left(L_h\|\cdot\| \right)(0) + N^{\varepsilon}_{\cal H} (z),
		 \]
		 where the last equality is due to the 
		 the affine  composition rule
		 for the $\varepsilon$-subdifferential 
		 (see \cite[Theorem 3.2.1]{Hiriart2}) and
		 the fact that
		 $N_{\cal H}^{\varepsilon}(\cdot) = \partial_{\varepsilon} I_{\cal H}(\cdot)$.
The implication now follows from the above two inclusions and the fact that
$\partial_{\varepsilon} \left(L_h\|\cdot\| \right)(0) = \bar B(0;L_h)$.
	\end{proof}
	
	We observe that a) of Lemma~\ref{Prop:eps-subd-bounded-cone}
	is the same as condition {\bf (B2)}.
	Conditions b) to e) are all equivalent to a), and hence
	{\bf (B2)}. The implication a)  $\Rightarrow$  ii)
	is the one that is used in
	the proof of  Lemma~\ref{lem:bound_xiN}.

	

\subsection{A basic refinement result}

Even though the result below,
which is used to prove Proposition~\ref{prop:approxsol},
is a slight variant of \cite[Lemma~32]{YheMoneiroNash},
we include its proof for the sake of completeness.

\begin{lemma}\label{lem:approxsolreps}
Assume that $\tilde h\in\bConv{n}$,
$\tilde g$ is  a differentiable function on ${\dom \tilde h}$,
and $(z,\varepsilon) \in {\dom \tilde h} \times \Re_+$ is such that
\begin{equation}\label{Inc:repsdif}
0 \in \partial_{\varepsilon}( \tilde g + \tilde h) (z).
\end{equation}
Assume also that
there exists $\tilde L>0$ such that  \begin{equation}\label{uppercurvature_hatg}
\tilde g(u)- \ell_{\tilde g}(u; z) \leq\frac{\tilde L}{2}\|u-z\|^2\qquad \forall u \in {\mathcal \dom \tilde h},
\end{equation}
and define
\begin{equation}\label{eq:def_zhat}
{\tilde z} := \argmin_u \left\{ \ell_{\tilde g}(u;z)  + \tilde h(u) + \frac{\tilde L}{2} \|u-z\|^2  \right \},  \qquad \widetilde w:= \tilde L(z-\tilde z).
\end{equation}
Then,  the quadruple $(z,\tilde z, \widetilde w, \varepsilon)$ satisfies 
\begin{equation}\label{eq:inclusion_w}
 \widetilde w \in \nabla \tilde g(z) + \partial \tilde h({\tilde z}), \qquad \widetilde w \in \nabla \tilde g(z) + \partial_{{\varepsilon}} \tilde h(z), \qquad \|\widetilde w\| \leq \sqrt{2 \tilde L\varepsilon}. 
\end{equation}
\end{lemma}

\begin{proof}
 The first inclusion in \eqref{eq:inclusion_w} follows from
 the definition of $\widetilde w$ and
 the optimality condition for the problem in  \eqref{eq:def_zhat}.
Now, using the first inclusion in \eqref{eq:inclusion_w}, the definition of $\widetilde w$ in \eqref{eq:def_zhat}, inclusion \eqref{Inc:repsdif},
inequality \eqref{uppercurvature_hatg},
and the subdifferential definition \eqref{def:epsSubdiff},
we conclude that
for every $u\in \Re^n$,
\begin{align*}
h(u)&\geq h(\tilde z)+\langle \widetilde w-\nabla \tilde  g(z),u-\tilde z\rangle \nonumber\\
&=h(z)+\langle \widetilde w-\nabla \tilde  g(z),u-z\rangle+h(\tilde z)-h(z)+\frac{\|\widetilde w\|^2}{\tilde L}+\langle\nabla \tilde  g(z),\tilde z-z\rangle\\
&\geq h(z)+\langle \widetilde w-\nabla \tilde  g(z),u-z\rangle+h(\tilde z)-h(z)+\frac{\|\widetilde w\|^2}{\tilde L}+g(\tilde z)-g(z) -\frac{\tilde L}{2}\|\tilde z-z\|^2\\
&\geq h(z)+\langle \widetilde w-\nabla \tilde  g(z),u-z\rangle-\varepsilon +\frac{\|\widetilde w\|^2}{2 \tilde L}
\end{align*}
which, in view of \eqref{def:epsSubdiff},
clearly implies the second inclusion in \eqref{eq:inclusion_w}.
Finally, the inequality in \eqref{eq:inclusion_w} follows from  the
above relations with $u=z$.
\end{proof}

\bibliographystyle{plain}
\bibliography{Proxacc_ref}

	\end{document}